\newtheorem{proposition}{Proposition}[section] %denna med i latex
\newtheorem{thm}[proposition]{Theorem}
\newtheorem{lemma}[proposition]{Lemma}
\newtheorem{remark}[proposition]{Remark}
\newtheorem{example}[proposition]{Example}
\newcommand{\N}{\ensuremath{{\mathbb N}}}
\newcommand{\R}{\ensuremath{{\mathbb R}}}
\newcommand{\D}{\ensuremath{{\mathcal D}}}
\newcommand{\E}{\ensuremath{{\mathbb E}}}
\newcommand{\Pro}{\ensuremath{{\mathbb P}}}
\begin{document}
\title{Optimal stopping of strong Markov processes}
%\title{On explicit solutions of optimal stopping problems for strong Markov processes}
\author{S\"oren Christensen\thanks{Christian-Albrechts-Universit\"at,
    Mathematisches Seminar, Ludewig-Meyn-Str.4, D-24098 Kiel, Germany,
    email: {christensen}@math.uni-kiel.de.} , Paavo
  Salminen\thanks{\AA bo Akademi, Department of Natural Sciences,
    Mathematics and Statistics, \small FIN-20500 \AA bo, Finland, email: phsalmin@abo.fi, tbao@abo.fi} , and Bao Quoc Ta \footnotemark[2]}
    \date{}

\maketitle

\begin{abstract}
We characterize the value function and the optimal stopping time for a
large class of optimal stopping problems where the underlying process
to be stopped is a fairly general Markov process. The main result 
is inspired by recent findings for L\'evy processes obtained essentially via
the Wiener-Hopf factorization. The main ingredient in our approach is
the representation of the $\beta$-excessive functions as expected
suprema. A variety of examples is given.
\end{abstract}
%
%introduce an approach for the explicit solution of optimal stopping problems driven by general one-dimensional Markov processes. This generalizes recent results for L�vy processes. Since fluctuation identities and the Wiener-Hopf factorizations are not known for general processes another tool is needed. We use representations of $\beta$-excessive functions as expected suprema to obtain our results. A variety of examples is given.

\textbf{Keywords:} {optimal stopping problem, Markov processes, Hunt processes, L\'evy processes, supremum representation for excessive functions}\\[.2cm]
% \PACS{PACS code1 \and PACS code2 \and more}

\section{Introduction}
\label{intro}

%In the following we fix a 
Consider a real-valued strong Markov process $X=(X_t)_{t\geq
  0}$. Let $\mathbb{P}_x$ and $\mathbb{E}_x$ stand for the probability %measure
and the expectation, respectively, associated with $X$ when initiated
from $x.$ The natural filtration generated by $X$ is denoted by $\mathcal F=(\mathcal{F}_t)_ {t\geq0}$  
and $\mathcal{M}$ is the set of all stopping times with
respect to $\mathcal F.$

We are interested in studying the following optimal stopping problem:
Find a function $V$ (value function) and a stopping time $\tau^*$
(optimal stopping time) such that 
\begin{equation}\label{OSP}\tag{*}
V(x):=\sup_{\tau\in\mathcal{M}}\mathbb{E}_x\Big({\rm e}^{-\beta\tau}G(X_\tau)\Big)
=\mathbb{E}_x\Big({\rm e}^{-\beta\tau^*}G(X_{\tau^*})\Big),
\end{equation}
where the function $G$ (reward function) is assumed to have some
regularity properties to be specified later. %be non-negative
%and measurable. 
On $\{\tau=\infty\}$ we define 
\[{\rm e}^{-\beta\tau}G(X_\tau)=\limsup_{t\rightarrow\infty}{\rm e}^{-\beta t}G(X_t).\]
Recall that a
non-negative, measurable function $u$ is called $\beta$-excessive for
$X$ if the following two conditions hold:
\[\mathbb{E}_x\Big({\rm e}^{-\beta t}u(X_t)\Big)\leq u(x)\ \ \forall\ t\geq
  0,\, x\in S,\]
\[\lim_{t\rightarrow 0} \mathbb{E}_x\Big({\rm e}^{-\beta
  t}u(X_t)\Big)=u(x)
\ \ \forall\ x\in S.\]
A basic result, see Theorem 1 p. 124 in Shiryayev \cite{shiryayev78},
is that if the reward function $G$ is
lower semicontinuous then the value function $V$ exists and is
characterized as the smallest $\beta$-excessive majorant of $G.$

During the last decade the theory
of optimal stopping for L\'evy processes has been developed strongly. This research has been
focused 
on analyzing the validity of the smooth pasting condition, see
Alili and Kyprianou \cite{AK}, Christensen and Irle \cite{CI}, on the one hand, and 
solving particular problems, see Boyarchenko and
Levendorskii \cite{BL}, 
Mordecki \cite{M}, Novikov and
Shiryayev \cite{NS}, \cite{NS2}, Kyprianou and Surya \cite{KS}, Surya
\cite{S}, Mordecki and Salminen \cite{MS}, Deligiannidis, Le and Utev
\cite{DLU}, on the other hand. The methodology to derive explicit
solutions is in many cases 
based on the Wiener-Hopf factorization.

For random walks in discrete time optimal stopping problems with
the reward functions  $G(x)=x^+$ and
$G(x)=({\rm e}^x-1)^+$ were solved already by Darling et al. in \cite{DLT}. 
%Starting with the results
%in \cite{M} the problem attracted much attention again in the last
%years. 
In Mordecki \cite{M} the results in \cite{DLT} were lifted to the
framework of L\'evy processes with the focus to the reward functions
$G(x)=(K-{\rm e}^x)^+$ (put option) and $G(x)=({\rm e}^x-K)^+$ (call
option). Furthermore, for the value function $G(x)=(x^+)^\nu,\nu>0,$
the problem was solved by Novikov and Shiryayev in \cite{NS} and \cite{NS2} for random walks and L\'evy processes using
Appell polynomials and Appell functions,
see also Kyprianou and Surya \cite{KS} and  Alili and Kyprianou \cite{AK}. In addition to the Wiener-Hopf factorization, the fluctuation identities constitute an essential tool in these solutions.

This technique was generalized  under certain conditions to be applicable for more general
reward functions independently 
in Surya \cite{S} and Deligiannidis et al. \cite{DLU}. In particular, in \cite{S} it is assumed 
% For these
%generalization it 
%was assumed 
that there exists a function $Q$ such that the  reward function has the representation 
\begin{equation}
\label{GQ}
G(x)=\mathbb{E}_0\Big(Q(x+M_T)\Big),
\end{equation}
where $T$ is an exponentially with parameter $\beta>0$ distributed
random variable independent of $X$ and  $M_t:=\sup_{0\leq s\leq t}X_s$
is the running maximum of the L\'evy process $X$. The function $Q$ should
have a unique positive root $x^*$ such that $Q(x)<0$ for
$x<x^*$ and $Q(x)>0$ and non-decreasing for $x>x^*$.  Then the
stopping problem (\ref{OSP}) is, in fact, one-sided and
the optimal stopping time is
$$
\tau^*:=\inf\{t  \,:\, X_t\geq x^*\}.
$$
Moreover, the value function is expressed in \cite{S}
as
$$
V(x)=\E_0\left(Q(x+M_T) 1_{\{x+M_T\geq x^*\}}\right).
$$

In \cite{DLU} the
starting point is the assumption that there exists a function $g$ such
that 
\begin{equation}
\label{Gg}
G(x)=\mathbb{E}_0\Big(g(x+X_T)\Big).
\end{equation}
To see the connection between $Q$ in (\ref{GQ}) and $g$ in (\ref{Gg})
recall the Wiener-Hopf factorization
\begin{equation}
\label{WiHo}
X_T\stackrel{(d)}=M_T+I^\circ_T,
\end{equation}
where $I^\circ_T\stackrel{(d)}=I_T:=\inf_{0\leq t\leq T}X_t$ and
independent of $X.$ Consequently,
\begin{equation}
\label{QR}
Q(x)=\E_0\Big(g(x+I_T)\Big).%=:R(x).
\end{equation}
Since in \cite{DLU} the stopping problem is analyzed via the 
function 
$$
x\rightarrow {\E}_{0}(g(x+I_T))
$$ 
the relationship (\ref{QR}) implies that the results in \cite{S} 
and \cite{DLU} are very close to each other.

Our main result characterizes the solution of the optimal stopping
problem \eqref{OSP} similarly as in \cite{NS}, \cite{S} and
\cite{DLU} but is valid for very general strong Markov processes,
e.g., for diffusions, L\'evy processes and continuous time
Markov chains. The main ingredient in our approach is the representation of $\beta$-excessive functions in the form
\[x\mapsto \mathbb{E}_x\Big(\sup_{0\leq t\leq T}{f}(X_t)\Big)\]
for a function ${f}\geq 0.$ This kind of representations have been studied in F\"ollmer
and Knispel
\cite{FK} and El Karoui and Meziou \cite{KM}.

The structure of the paper is the following: In the next section we
state and prove the main theorems 
which lead to explicit solutions to the optimal stopping problem
\eqref{OSP} 
for general Hunt processes $X$ and reward functions $G$ with some
representation properties. 
Depending on the form of the representing function $\hat{f}$ the optimal stopping time is
one-sided or two-sided. 
We furthermore present in section 2.2 a way for finding $\hat{f}$ explicitly. In section 2.3 the connection to the representing measure approach is discussed. 
%The proofs of these results are given in an appendix at the end of the paper. 
In Section \ref{ex} the results are illustrated with a variety of examples. 
\newpage

\section{Main results}\label{main}
\subsection{Verification theorems}
%In the following we fix a 
We use the notation from the introduction and let  $X=(X_t)_{t\geq
  0}$ be a Hunt process taking values in a subset $S$ of
${\R}$, e.g., an interval or a finite set.
% \deleted[Soren][I do not have the reference here, but if I remember right, then (at least in Chung, Walsh) in the Definition of Hunt processes the filtration used is such that the null-sets are contained. Then the right-continuity is a consequence. Therefore we do not have to assume it.]{ It is assumed that the
%natural filtration  $\mathcal F$ of $X$ satisfies the
%usual conditions. XXXX we do not need this !?XXXX} 
We refer to \cite{MS} for references and further discussions on Hunt
processes. For a full account see Blumenthal and Getoor \cite{BG},
Chung and Walsh \cite{CW} and Sharpe \cite{Sh}. In particular, $X$ has
the strong Markov property and is quasi left continuous with right
continuous sample paths with left limits. For simplicity, we
suppose that the 
lifetime of $X$ is a.s. infinite. 
%$\zeta\equiv\infty$. 

\begin{remark}
In some problems it is of interest to study $X,$ e.g., up to the
passage time of a level. In such cases $X$ 
is viewed as a Hunt process with lifetime equal to the passage
time. For instance, 
a Brownian motion $X$ killed at the first passage time 
\[\zeta_0:=\inf\{t\geq 0: X_t<0\}\]
is a Hunt process with lifetime $\zeta_0$. It is not difficult to
see how to refine our approach to cover these and more general
lifetimes, and we leave this task to the interested reader. % but, for simplicity, we do not treat this extension herein. 
\end{remark}
%, taking values in a subinterval $I$ of the real line, e.g., $X$ is a
%Hunt process 

%\deleted[Soren][I also do not find point where we use it.]{It is furthermore assumed that process $X$ is such that the
%$\beta$-excessive functions are lower semicontinuous. XXXX we do not
%perhaps need this ? XXXX }
Throughout the paper, the notation
$T$ is used for an exponentially distributed random variable assumed
to be independent of $X$ and having the mean $1/\beta$ where $\beta >0$
is the discounting rate introduced in (\ref{OSP}). We remark that a
function $u$  is $\beta$-excessive if it is $0$-excessive, i.e, excessive,
for $X$ killed at $T.$ %a random $Exp(\beta)$-distributed time $T$,
%independent of $X$. 

The following lemma is the key to our approach for the optimal
stopping problem (\ref{OSP}). 
It is proved similarly as Proposition 2.1 in \cite{FK}. We give a short proof for the sake of completeness.

\begin{lemma}\label{lem:exc}
Let $f:S\mapsto \R_+$ be an upper semicontinuous function and define
$u(x):=\mathbb{E}_x\big(\sup_{0\leq t\leq T}f(X_t)\big).$ Then the function
$u: S\mapsto \mathbb{R}\cup\{+\infty\}$ is $\beta$-excessive.
\end{lemma}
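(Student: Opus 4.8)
The plan is to verify the two defining properties of a $\beta$-excessive function directly from the definition of $u$, exploiting the memorylessness of the exponential time $T$ together with the strong Markov property. First I would set $Z := \sup_{0\le t\le T} f(X_t)$, so that $u(x) = \mathbb{E}_x(Z)$, and introduce for fixed $s\ge 0$ the decomposition $\sup_{0\le t\le T} f(X_t) = \sup_{0\le t\le s\wedge T} f(X_t) \vee \mathbf{1}_{\{T>s\}}\sup_{s\le t\le T} f(X_t)$. The key observation is that, conditionally on $\{T>s\}$ and on $\mathcal{F}_s$, the residual time $T-s$ is again exponential with parameter $\beta$ and independent of the post-$s$ process; hence by the Markov property $\mathbb{E}_x\big(\mathbf{1}_{\{T>s\}}\sup_{s\le t\le T} f(X_t)\,\big|\,\mathcal{F}_s\big) = \mathrm{e}^{-\beta s}\, u(X_s)$ on the relevant set. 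Dropping the first term in the supremum and taking expectations yields
\[
\mathbb{E}_x\big(\mathrm{e}^{-\beta s} u(X_s)\big) \le \mathbb{E}_x\Big(\sup_{0\le t\le s\wedge T} f(X_t)\vee \mathbf{1}_{\{T>s\}}\sup_{s\le t\le T}f(X_t)\Big) = u(x),
\]
which is the supermartingale-type inequality. (One should be slightly careful with the case $T\le s$, where the second term is absent; there the inequality is trivially respected since the left side only sees $X_s$ while the right side already contains $\sup_{0\le t\le T}f(X_t)$.)

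For the second property, $\lim_{s\downarrow 0}\mathbb{E}_x(\mathrm{e}^{-\beta s} u(X_s)) = u(x)$, I would argue by monotone/dominated convergence. Write $u(x) - \mathbb{E}_x(\mathrm{e}^{-\beta s}u(X_s)) = \mathbb{E}_x\big(Z - \mathrm{e}^{-\beta s}\sup_{s\le t\le T}f(X_t)\mathbf{1}_{\{T>s\}}\big)$ using the identity above; as $s\downarrow 0$ the factor $\mathrm{e}^{-\beta s}\to 1$, the event $\{T>s\}$ increases to $\{T>0\}$ (full measure), and $\sup_{s\le t\le T}f(X_t)\uparrow \sup_{0< t\le T} f(X_t)$. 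The only gap between this last quantity and $Z = \sup_{0\le t\le T}f(X_t)$ is the contribution of $f(X_0)=f(x)$. Here is where upper semicontinuity of $f$ together with right-continuity of the Hunt paths enters: $\limsup_{t\downarrow 0} f(X_t) \le f(X_0)$ $\mathbb{P}_x$-a.s., so in fact $\sup_{0< t\le T}f(X_t)\vee f(x) = \sup_{0\le t\le T}f(X_t)$ and no mass is lost in the limit. A truncation argument ($f\wedge N$, then let $N\to\infty$ by monotone convergence) handles the possibility $u(x)=+\infty$ and legitimizes the interchange of limits.

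The main obstacle I anticipate is the careful handling of the $t=0$ endpoint in the supremum: a naive splitting gives $\sup_{0< t\le T}$ rather than $\sup_{0\le t\le T}$, and closing this gap is exactly what forces the upper-semicontinuity hypothesis on $f$ (without it, $u$ need not be lower-regular at $0$ and the second excessivity condition can fail). A secondary technical point is justifying the conditional expectation identity $\mathbb{E}_x(\mathbf{1}_{\{T>s\}}\sup_{s\le t\le T}f(X_t)\mid\mathcal{F}_s) = \mathrm{e}^{-\beta s}u(X_s)$ rigorously — this requires the independence of $T$ from $X$, the lack-of-memory property of the exponential distribution, and an application of the strong Markov property at the deterministic time $s$; measurability of $Z$ (hence of $u$) follows from right-continuity of paths and upper semicontinuity of $f$, which lets us realize the running supremum as a countable supremum over rationals. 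Everything else is routine.
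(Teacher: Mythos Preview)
Your approach is exactly the paper's: establish $\E_x\big(\mathrm{e}^{-\beta s}u(X_s)\big)=\E_x\big(\sup_{s\le t\le T}f(X_t);\,s\le T\big)$ via memorylessness of $T$ and the Markov property, drop the $[0,s]$ portion of the supremum to get the supermeanvalue inequality, and then let $s\downarrow 0$ by monotone convergence for the second excessivity condition. The paper's proof contains precisely these steps, with the same conditional-expectation manipulation.

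One point in your write-up needs correction. You correctly identify that monotone convergence only delivers $\sup_{0<t\le T}f(X_t)$ rather than $\sup_{0\le t\le T}f(X_t)$, but your resolution is backwards: upper semicontinuity together with right-continuous paths gives $\limsup_{t\downarrow 0}f(X_t)\le f(X_0)$, which is the \emph{wrong} direction for concluding $f(X_0)\le \sup_{0<t\le T}f(X_t)$. The identity you display, $\sup_{0<t\le T}f(X_t)\vee f(x)=\sup_{0\le t\le T}f(X_t)$, is a tautology that holds for any $f$ and uses no hypothesis at all; it does not close the gap. The paper does not attempt to justify this passage either --- it simply writes ``by the monotone convergence'' --- so you have faithfully reproduced the argument; you have just been more explicit about where a subtlety sits. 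In the paper, upper semicontinuity is invoked only to guarantee that $t\mapsto \sup_{0\le r\le t}f(X_r)$ is right-continuous (hence measurable), not to handle the limit at $t=0$.
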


\begin{proof}
By the upper semicontinuity of $f$ and the right continuity of the
sample paths of $X$ the function $t\mapsto \sup_{0\leq s\leq t}f(X_s)$ is right
continuous and, hence, measurable. It follows that $u$ is
well-defined. Consider for $t>0$ 
\begin{align*}
u(x)&=\E_x\Big(\sup_{0\leq s\leq T}{f}(X_s)\Big)\\
&\geq \mathbb{E}_x\Big(\sup_{t\leq s\leq T}{f}(X_s);t\leq
T\Big)=\mathbb{E}_x\Big(\mathbb{E}_{x}(\sup_{t\leq s\leq T}{f}(X_s)
|\mathcal{F}_t);t\leq T\Big)\\
&=\mathbb{E}_x\Big(\mathbb{E}_{x}\big(\sup_{t\leq s\leq T}{f}(X_s)|X_t\big);t\leq T\Big)=\mathbb{E}_x\Big(u(X_t);t\leq T\Big)\\
&=\mathbb{E}_x\Big({\rm e}^{-\beta t}u(X_t)\Big),
\end{align*}
and, therefore, 
\[\mathbb{E}_x\Big(u(X_t);t\leq T\Big)\leq u(x).\]
Next,
\begin{align*}
\lim_{t\rightarrow 0}\mathbb{E}_x\Big({\rm e}^{-\beta t}u(X_t)\Big)&=\lim_{t\rightarrow 0}\mathbb{E}_x\Big(\sup_{t\leq s\leq T}{f}(X_s);t\leq T\Big)\\
&=\mathbb{E}_x\Big(\sup_{0\leq s\leq T}{f}(X_s)\Big)\\
&=u(x),
\end{align*}
where the second equality holds by the monotone convergence.
\end{proof}
\begin{example}
\label{diffusion}
Consider a regular linear diffusion $X$ on $\mathbb{R}$. Let $x\rightarrow
u_{\beta}(x, y)$ be the $\beta$-resolvent kernel associated with $X.$ 
As is well-known, the function $x\rightarrow u_{\beta}(x, y)$  is $\beta$-excessive for any fixed $y$.
We find the representation $u_{\beta}(\cdot, y)$ as expected
supremum. Recall that 
\begin{equation}
\label{e1}
\mathbb{E}_{x}\big({\rm e}^{-\beta H_y}\big)=\frac{u_{\beta}(x,y)}{u_{\beta}(y,y)}=\left\{
\begin{array}{rl}
\frac{\psi_{\beta}(x)}{\psi_{\beta}(y)},&\text{if}\quad x{\leq} y,\\
\frac{\varphi_{\beta}(x)}{\varphi_{\beta}(y)},&\text{if}\quad x {\geq} y,
\end{array}\right\}
\end{equation}
where $H_y:=\inf\{t: X_t=y\}$ and $\psi_{\beta}$ and $\varphi_{\beta}$
are the fundamental increasing and decreasing, 
respectively, solutions for the generalized differential operator associated with $X$, see \cite{BS}.
Introducing 
\begin{equation*}
x\rightarrow f_y(x)=\left\{
\begin{array}{rl}
u_{\beta}(y, y),&\text{if }\quad x=y,\\
0,&\text{otherwise},
\end{array}\right\}
\end{equation*}
we have by (\ref{e1})
\begin{align*}
\mathbb{E}_{x}\Big(\sup_{t\leq T}f_{y}(X_t)\Big)&=u_{\beta}(y, y)\,\mathbb{P}_{x}(H_y < T)\\
&=u_{\beta}(y,y)\,\mathbb{E}_{x}({\rm e}^{-\beta H_y})\\
&=u_{\beta}(x, y).
\end{align*}
For another example, consider the $\beta$-excessive function
$\psi_{\beta}\wedge 1.$ We assume that there exists a point $y$
such that $\psi_{\beta}(y)=1.$ Notice that  $\psi_{\beta}(x)\geq 1$
for all $x\geq y$ since $\psi_{\beta}$ is non-decreasing. Define
\begin{equation*}
x\rightarrow f_y^{\uparrow}(x)={\bf 1}_{[y,+\infty)}(x).
\end{equation*}
Then, calculating as above, we have  %for $x\leq y$
%\begin{align*}
%\mathbb{E}_{x}(\sup_{t\leq T}f_y^{\uparrow}(X_t))=
%\mathbb{P}_{x}(H_y< T)&=\frac{\psi_{\beta}(x)}{\psi_{\beta}(y)}
%&=\psi_{\beta}(x),
%\end{align*}
%and, hence,
\begin{equation*}
\mathbb{E}_{x}\Big(\sup_{t\leq T}f_{y}^{\uparrow}(X_t)\Big)=\left\{
\begin{array}{rl}
 1,&\text{if}\quad x\geq y,\\
%{\psi_{\beta}(x)}/{\psi_{\beta}(y)},&\text{if}\quad x < y
\displaystyle{\psi_{\beta}(x)},&\text{if}\quad x \leq y.
\end{array}\right\}
\end{equation*}
\end{example}
We consider now the optimal stopping problem \eqref{OSP} and assume throughout the paper that the reward function $G$ is non-negative, lower semicontinuous and satisfies
\begin{equation}\label{cond_g}
{\E}_{x}\Big(\sup_{t\geq 0}{\rm e}^{-\beta t}G(X_t)\Big)<\infty
\end{equation}
Our main result which characterizes solutions of
\eqref{OSP} is as follows:
\begin{thm} 
\label{thm:one-sided}
(One-sided case). Assume that there exists an upper semicontinuous function
$\hat{f}:S\mapsto\R$
%\added[Soren][We used two different symbols for the real line: $\R$ and $\RR$. I do not have any preferences, but I decided for the first one, ok?]{}
 and a point $x^*\in S$ such that 
\begin{enumerate}[(a)]
	\item \label{ii}	
          \begin{enumerate}[(i)]
		\item $\hat{f}(x)\leq 0$ for $x\leq x^*$,
		%\item $\hat{f}(x^*)=0$,
		\item $\hat{f}(x)$ is positive and  non-decreasing for $x>x^*$.
	\end{enumerate}
	\item \label{c}
		\begin{enumerate}[(i)]
		\item $\mathbb{E}_x\Big(\sup_{0\leq t\leq T}\hat{f}(X_t)\Big)=G(x)$ for $x\geq x^*$
		\item $\mathbb{E}_x\Big(\sup_{0\leq t\leq T}\hat{f}(X_t)\Big)\geq G(x)$ for $x\leq x^*$\label{eq:cii}.
	\end{enumerate}
\end{enumerate}
Then the value function of the optimal stopping problem (\ref{OSP}) is given by
\begin{equation}
\label{value_fct}
V(x)=\mathbb{E}_x\Big(\sup_{0\leq t\leq T}\hat{f}(X_t)1_{\{X_t\geq x^*\}}\Big)
=\E_x\left(\hat{f}(M_T)1_{\{M_T\geq x^*\}}\right)
\end{equation}
%\begin{align}
%V(x)=&\mathbb{E}_x\left(\sup_{0\leq t\leq T}\hat{f}(X_t)1_{\{X_t\geq x^*\}}\right)\label{value_fct}\\
%=&\E_x\left(\hat{f}(M_T)1_{\{M_T\geq x^*\}}\right)\nonumber
%\end{align}
and
\[\tau^*:=\inf\{t\geq 0: X_t> x^*\}\]
is an optimal stopping time.
\end{thm}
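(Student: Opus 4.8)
The plan is to prove the theorem in three parts: first that the proposed $V$ is $\beta$-excessive and dominates $G$, so $V \ge V_{OSP}$ where $V_{OSP}$ is the true value function; second that $V \le V_{OSP}$ by realizing $V$ as the expected discounted payoff under the candidate stopping time $\tau^*$; and third the identification $\tau^*$ as an optimal stopping time. Throughout I write $M_T := \sup_{0\le t\le T} X_t$ and $\hat{f}^+ := \hat f \vee 0$, noting that by condition (a) we have $\hat f(x)\mathbf 1_{\{x\ge x^*\}} = \hat f^+(x)$ for all $x$ except possibly at $x^*$ itself, and that $\hat f^+$ is upper semicontinuous, so Lemma~\ref{lem:exc} applies to it.

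\medskip

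\textbf{Step 1 (the candidate is a $\beta$-excessive majorant).} The function $x\mapsto \mathbb E_x\big(\sup_{0\le t\le T}\hat f^+(X_t)\big)$ is $\beta$-excessive by Lemma~\ref{lem:exc}. I first argue this equals the right-hand side of \eqref{value_fct}: since $\hat f$ is non-decreasing and positive on $(x^*,\infty)$ and $\le 0$ on $(-\infty,x^*]$, the supremum of $\hat f^+(X_t)$ over $t\le T$ is attained "at the running maximum", i.e. equals $\hat f^+(M_T) = \hat f(M_T)\mathbf 1_{\{M_T\ge x^*\}}$ up to the boundary contribution at $x^*$, which is negligible because $\hat f(x^*)\le 0$. (One must be a little careful here: right-continuity of paths and upper semicontinuity of $\hat f$ guarantee the running supremum $\sup_{s\le t}\hat f^+(X_s)$ is right-continuous in $t$ and, by monotonicity of $\hat f^+$ on $[x^*,\infty)$, equals $\hat f^+$ evaluated at $\sup_{s\le t}X_s$ whenever that supremum is $\ge x^*$.) Next I show $V \ge G$ everywhere: for $x\ge x^*$ this is exactly assumption (b)(i) combined with the fact that $\mathbb E_x(\sup_{t\le T}\hat f(X_t)) = \mathbb E_x(\sup_{t\le T}\hat f^+(X_t))$ when $x\ge x^*$ — indeed starting at or above $x^*$ the process immediately gives a nonnegative value of $\hat f^+$, and negative values of $\hat f$ never contribute to the supremum of $\hat f^+$; and for $x\le x^*$ it is assumption (b)(ii) together with $\mathbb E_x(\sup_{t\le T}\hat f(X_t)) \le \mathbb E_x(\sup_{t\le T}\hat f^+(X_t)) = V(x)$. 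Since $G$ is lower semicontinuous and \eqref{cond_g} holds, the true value function $V_{OSP}$ is the smallest $\beta$-excessive majorant of $G$ (Shiryayev's theorem cited in the introduction), hence $V_{OSP}\le V$.

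\medskip

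\textbf{Step 2 (the reverse inequality via $\tau^*$).} It remains to show $V(x) \le \mathbb E_x\big(\mathrm e^{-\beta\tau^*}G(X_{\tau^*})\big)$, which combined with Step 1 yields equality everywhere and optimality of $\tau^*$. Let $\tau^* = \inf\{t\ge 0: X_t > x^*\}$. On $\{\tau^* \le T\}$ one has, by the strong Markov property at $\tau^*$ and the memorylessness of $T$, that $\sup_{\tau^*\le t\le T}\hat f^+(X_t)$ has conditional expectation given $\mathcal F_{\tau^*}$ equal to $\mathbb E_{X_{\tau^*}}(\sup_{t\le T}\hat f^+(X_t)) = V(X_{\tau^*})$; and since $X_{\tau^*}\ge x^*$ (using quasi-left-continuity and right-continuity to control the value at the passage time), this equals $G(X_{\tau^*})$ by Step 1's identity for $x\ge x^*$. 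On $\{\tau^* > T\}$ the path stays in $(-\infty, x^*]$ on $[0,T]$, where $\hat f \le 0$, so $\hat f^+(X_t) = 0$ there and the running supremum $\sup_{t\le T}\hat f^+(X_t)$ vanishes (again modulo the $x^*$-boundary, which contributes nothing since $\hat f(x^*)\le 0$). Therefore
\[
V(x) = \mathbb E_x\Big(\sup_{0\le t\le T}\hat f^+(X_t)\Big) = \mathbb E_x\Big(\sup_{0\le t\le T}\hat f^+(X_t); \tau^*\le T\Big) = \mathbb E_x\big(V(X_{\tau^*}); \tau^*\le T\big) = \mathbb E_x\big(\mathrm e^{-\beta\tau^*}G(X_{\tau^*})\big),
\]
the last step because $\{\tau^*\le T\}$ has $\mathbb P_x$-conditional probability $\mathrm e^{-\beta\tau^*}$ given $\mathcal F_{\tau^*}$, and on $\{\tau^*=\infty\}$ the term $\mathrm e^{-\beta\tau^*}G(X_{\tau^*})$ vanishes consistently with the convention in \eqref{OSP} (here \eqref{cond_g} is used to ensure integrability and that the $\limsup$ convention causes no loss). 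This gives $V(x) \le \sup_\tau \mathbb E_x(\mathrm e^{-\beta\tau}G(X_\tau)) = V_{OSP}(x)$, and hence $V = V_{OSP}$ with $\tau^*$ optimal.

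\medskip

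\textbf{Main obstacle.} The delicate point is the behaviour exactly at the threshold $x^*$: controlling $X_{\tau^*}$ relative to $x^*$ (the process may or may not hit $x^*$ exactly, and $\tau^*$ uses strict inequality while the assumptions are stated with $\ge$/$\le$), and verifying that the value of $\hat f$ at the single point $x^*$ never affects the running supremum of $\hat f^+$. This requires the path regularity of Hunt processes — right-continuity, left limits, quasi-left-continuity — together with the upper semicontinuity of $\hat f$, and is the step where one must be genuinely careful rather than formulaic. The identity $\sup_{t\le T}\hat f^+(X_t) = \hat f^+(M_T)$ also deserves a clean justification along these lines.
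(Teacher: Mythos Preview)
Your proof is correct and follows essentially the same two–step strategy as the paper: establish that the candidate is a $\beta$-excessive majorant of $G$ (via Lemma~\ref{lem:exc} and condition~(b)), then obtain the reverse inequality by applying the strong Markov property at $\tau^*$ together with the memorylessness of $T$.

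The one substantive difference is in how the reverse inequality is closed. You pass through the identity $V(x)=\E_x\big({\rm e}^{-\beta\tau^*}V(X_{\tau^*})\big)$ and then need $V(X_{\tau^*})=G(X_{\tau^*})$ on $\{X_{\tau^*}\ge x^*\}$, which is exactly the boundary issue you flag as the ``main obstacle''. This does hold, but the clean reason is that upper semicontinuity of $\hat f$ together with (a)(i)--(ii) forces $\hat f(x^*)=0$: from (a)(ii) the right limit $\lim_{y\downarrow x^*}\hat f(y)\ge 0$, upper semicontinuity then gives $\hat f(x^*)\ge 0$, and (a)(i) gives $\hat f(x^*)\le 0$. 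Hence for any start $x\ge x^*$ one has $\sup_{t\le T}\hat f(X_t)\ge \hat f(X_0)\ge 0$, so $\sup_{t\le T}\hat f(X_t)=\sup_{t\le T}\hat f^+(X_t)$ and $V=G$ on $[x^*,\infty)$. Your write-up gestures at this but does not state it; making it explicit removes the hand-waving about ``path regularity''.

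The paper sidesteps this boundary analysis entirely: it keeps the expression in the form $\E_x\big({\rm e}^{-\beta\tau^*}\E_{X_{\tau^*}}(\hat f(\sup_{t\le T}X_t))\big)$, invokes the pointwise inequality $\hat f(\sup_t X_t)\le \sup_t\hat f(X_t)$, and then applies (b)(i) directly. This yields $\hat V(x)\le \E_x\big({\rm e}^{-\beta\tau^*}G(X_{\tau^*})\big)$ as an inequality rather than an equality, which is all that is needed and avoids any discussion of what happens exactly at $x^*$.
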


\begin{proof}
Since the function $x\mapsto\hat{f}(x)1_{\{x\geq x^*\}}$ is
non-decreasing, the second equality in \eqref{value_fct} holds. Let
$\hat V$ denote the function given via these expressions, i.e.,
$$
\hat V(x):=\mathbb{E}_x\Big(\sup_{0\leq t\leq T}\hat{f}(X_t)1_{\{X_t\geq x^*\}}\Big).
$$
By Lemma \ref{lem:exc}, we know that $\hat V$ is $\beta$-excessive, 
so that $({\rm e}^{-\beta t}\hat V(X_t))_{t\geq 0}$ is a non-negative c\`{a}dl\`{a}g
supermartingale (see \cite{CW} p. 101). 
Hence by optional sampling we have for all $x$ and all stopping times $\tau$
\begin{equation}\label{eq:supermart}
\hat V(x)\geq \E_x\Big({\rm e}^{-\beta \tau}\hat V(X_\tau)\Big).
\end{equation}
By condition (\ref{ii}) we furthermore have $\hat{f}(x)1_{\{x\geq
  x^*\}}\geq \hat{f}(x)$ for all $x$, 
therefore using \eqref{c} we obtain from \eqref{eq:supermart}
\begin{equation*}
\hat V(x)\geq \E_x\Big({\rm e}^{-\beta \tau}G(X_\tau)\Big).
\end{equation*}
To conclude the proof we show that 
\begin{equation}\label{eq:leq}
\hat V(x)\leq \E_x\Big({\rm e}^{-\beta \tau^*}G(X_{\tau^*})\Big).
\end{equation}
The value function $V$ being the smallest $\beta$-excessive
majorant of $G$ it then follows that $\hat V=V$ and $\tau^*$ is an
optimal stopping time, as claimed.
To prove \eqref{eq:leq} consider
\begin{align*}
\hat V(x)&=\E_x\Big(\sup_{0\leq t\leq T}\hat{f}(X_t)1_{\{X_t\geq x^*\}}\Big)\\
&=\E_x\Big(\sup_{0\leq t\leq T}\hat{f}(X_t)1_{\{X_t\geq x^*\}};\tau^*<T\Big)\\
&=\E_x\Big(\hat{f}(M_T)1_{\{M_T\geq x^*\}};\tau^*<T\Big)\\
&=\E_x\Big(\hat{f}(M_T);\tau^*<T\Big)\\
&=\E_x\Big(\hat{f}(\sup_{\tau^*\leq t\leq T}X_t);\tau^*<T\Big)\\
&=\E_x\Big({\rm e}^{-\beta\tau^*}\E_{X_{\tau^*}}\big(\hat{f}(\sup_{0\leq t\leq T}X_t)\big)\Big)
%&=\E_x({\rm e}^{-\beta\tau^*}\E_{X_{\tau^*}}(\hat{f}(\sup_{0\leq t\leq T^\circ}X_t))),
\end{align*}
by the strong Markov property.
%, where $T^\circ$ is another $Exp(\beta)$-distributed random variable
%independent of the process $X$. 
For the last equality note that by the memoryless property of the exponential distribution
\begin{align*}
&\E_x\Big(\hat{f}(\sup_{\tau^*\leq t\leq T}X_t);\tau^*<T\Big)\\
=&\int_{0}^{\infty} \E_x\Big(\E_x\Big(\hat{f}(\sup_{\tau^*\leq t\leq r}X_t)|\mathcal{F}_{\tau^*}\Big){1}_{\{\tau^*\leq r\}}\Big)\Pro(T\in dr)\\
%=&\int \E_x\E_{X_{\tau^*}}\left(\hat{f}(\sup_{0\leq t\leq r}X_t)\right){1}_{\{\tau^*\leq r\}}\Pro(T\in dr)\\
=&\E_x\Big(\E_{X_{\tau^*}}\Big(\int_{0}^{\infty}\hat{f}(\sup_{0\leq t\leq r}X_t)\Pro(T\in dr)\Big){\rm e}^{-\beta\tau^*}\Big)\\
=&\E_x\Big({\rm e}^{-\beta\tau^*}\E_{X_{\tau^*}}\big(\hat{f}(\sup_{0\leq t\leq T}X_t)\big)\Big).
\end{align*}
Since $\hat{f}(\sup_{0\leq t\leq T}X_t)\leq \sup_{0\leq t\leq T}\hat{f}(X_t)$ we obtain
%Since $\hat{f}(\sup_{0\leq t\leq T^\circ}X_t)\leq \sup_{0\leq t\leq T^\circ}\hat{f}(X_t)$ we obtain
\begin{align*}
\hat V(x)&\leq \E_x\Big({\rm e}^{-\beta\tau^*}\E_{X_{\tau^*}}\big(\sup_{0\leq t\leq T}\hat{f}(X_t)\big)\Big)\\
&=\E_x\Big({\rm e}^{-\beta\tau^*}G(X_{\tau^*})\Big),
\end{align*}
%\begin{align*}
%\hat V(x)&\leq \E_x({\rm e}^{-\beta\tau^*}\E_{X_{\tau^*}}(\sup_{0\leq t\leq T^\circ}\hat{f}(X_t)))\\
%&=\E_x({\rm e}^{-\beta\tau^*}G(X_{\tau^*})),
%\end{align*}
where we used \eqref{c}(ii) in the last step. This shows \eqref{eq:leq} and the claim is proved.
\end{proof}

\begin{remark}\label{rem:left-sided}
By applying Theorem \ref{thm:one-sided} to the process $-X$, the
reward function $x\mapsto G(-x)$ and 
to the function $x\mapsto\hat{f}(-x)$ one obtains the analogous result
for a threshold to the left applicable, e.g., to put-type options.
\end{remark}

Although one-sided stopping rules as in the previous theorem often arise naturally in many examples (especially for monotonic reward functions and discounting, see also \cite{CIN} for a general discussion), for other problems a two-sided stopping rule is needed. The following theorem gives the corresponding result for this case.

\begin{thm} (Two-sided case)\label{thm:two-sided}  Assume that there exists a lower semicontinuous function
$\hat{f}:S\mapsto\R$ and points $x_*,\,x^* \in S$ such that 
\begin{enumerate}[(a)]
	\item 
	\begin{enumerate}[(i)]
		\item $\hat{f}(x)\leq 0$ for $x_*<x<x^*$,
		%\item $\hat{f}(x_*)=\hat{f}(x^*)=0$,
		\item $\hat{f}(x)$ is non-increasing for $x<x_*$ and non-decreasing for $x>x^*$,
	\end{enumerate}
	\item 
		\begin{enumerate}[(i)]
		\item $\E_x\Big(\sup_{0\leq t\leq T}\hat{f}(X_t)\Big)=G(x)$ for $x\not\in[x_*, x^*]$,
		\item $\E_x\Big(\sup_{0\leq t\leq T}\hat{f}(X_t)\Big)\geq G(x)$ for $x\in[x_*, x^*]$.
	\end{enumerate}
\end{enumerate}
Then the value function $V$ of the optimal stopping problem (\ref{OSP}) satisfies%is given by
\begin{align}
\label{2-sid}
\nonumber
V(x)=&\E_x\Big(\sup_{0\leq t\leq
  T}\hat{f}(X_t)1_{\{X_t\not\in[x_*,x^*]\}}\Big)
\\
%\leq x_*\mbox{~or~} X_t\geq x^*\}})\\
=&
\E_x\Big(\hat{f}(I_T)1_{\{I_T\leq x_*\}}\vee \hat{f}(M_T)1_{\{M_T\geq x^*\}}\Big)
\end{align}
and
\[\tau^*:=\inf\{t\geq 0: X_t\not\in[x_*,x^*]\}\]
%\leq x_*\mbox{~or~} X_t\geq x^*\}\]
is an optimal stopping time. 
%Here $I_t:=\inf_{s\leq t} X_s$ denotes the running minimum process of $X$.
\end{thm}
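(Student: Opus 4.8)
The plan is to follow the proof of Theorem~\ref{thm:one-sided} line by line, the only structural change being that the single running supremum $M_T$ and the single threshold $x^*$ get replaced by the pair $(I_T,M_T)$ and the two thresholds $x_*<x^*$: the downward excursions of $X$ below $x_*$ contribute to the supremum through $\hat f(I_T)$ (since $\hat f$ is non-increasing there), and the upward excursions above $x^*$ through $\hat f(M_T)$ (since $\hat f$ is non-decreasing there). Write $\tilde f:=\hat f\,\mathbf 1_{\{\,\cdot\,\notin[x_*,x^*]\}}$, which is non-negative (as in the one-sided case $\hat f$ should be positive off $[x_*,x^*]$) and vanishes on $[x_*,x^*]$, and put $\hat V(x):=\E_x\big(\sup_{0\le t\le T}\tilde f(X_t)\big)$; the claim to be shown is $V=\hat V$ with $\tau^*$ optimal. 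First I would prove the second equality in \eqref{2-sid}. Fixing a path and splitting $[0,T]$ into the times where $X_t<x_*$, where $X_t>x^*$, and where $X_t\in[x_*,x^*]$, one has $\tilde f(X_t)=0$ on the last set (and, since $\hat f$ is lower semicontinuous and $\le0$ on the open interval $(x_*,x^*)$, in fact $\hat f\le0$ on the whole closed interval $[x_*,x^*]$, so those zeros only float an otherwise non-positive supremum up to $0$). On $\{X_t<x_*\}$ the values of $X$ have infimum $I_T$ when $I_T<x_*$, and lower semicontinuity together with monotonicity forces $\hat f$ to be right-continuous on $(-\infty,x_*)$, so $\sup\{\hat f(X_t):X_t<x_*\}=\hat f(I_T)$ — the possible non-attainment of $I_T$ along the path being absorbed by this one-sided continuity. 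Symmetrically $\sup\{\hat f(X_t):X_t>x^*\}=\hat f(M_T)$ on $\{M_T>x^*\}$, using that $\hat f$ is left-continuous on $(x^*,\infty)$. Taking the maximum of the three contributions gives $\sup_{0\le t\le T}\tilde f(X_t)=\hat f(I_T)\mathbf 1_{\{I_T\le x_*\}}\vee\hat f(M_T)\mathbf 1_{\{M_T\ge x^*\}}$ pathwise, hence in expectation.

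Next I would show $\hat V\ge V$. The function $\hat V$ is $\beta$-excessive: this is Lemma~\ref{lem:exc} with only two bookkeeping points to re-examine, since here $\hat f$ is lower rather than upper semicontinuous. Measurability of $t\mapsto\sup_{0\le s\le t}\tilde f(X_s)$ is automatic because this map is non-decreasing in $t$; and the identity $\lim_{t\downarrow0}\E_x(e^{-\beta t}\hat V(X_t))=\hat V(x)$ is if anything easier under lower semicontinuity, which is exactly what delivers $\tilde f(X_0)\le\sup_{0<s\le T}\tilde f(X_s)$ along right-approaching sequences. The remaining supermartingale computation in the proof of Lemma~\ref{lem:exc} uses only the Markov property and the memorylessness of $T$, so it goes through verbatim. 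Thus $(e^{-\beta t}\hat V(X_t))_{t\ge0}$ is a non-negative c\`adl\`ag supermartingale. Moreover $\tilde f\ge\hat f$ pointwise (again because $\hat f\le0$ on $[x_*,x^*]$), so $\hat V(x)\ge\E_x(\sup_{0\le t\le T}\hat f(X_t))$, which equals $G(x)$ off $[x_*,x^*]$ by assumption~(b)(i) and is $\ge G(x)$ on $[x_*,x^*]$ by assumption~(b)(ii); hence $\hat V$ is a $\beta$-excessive majorant of the lower semicontinuous reward $G$, and since $V$ is the smallest such, $\hat V\ge V$.

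The reverse inequality $\hat V\le V$ is where the two-sided structure really has to be handled. It suffices to prove $\hat V(x)\le\E_x\big(e^{-\beta\tau^*}G(X_{\tau^*})\big)$, since the right-hand side is $\le V(x)\le\hat V(x)$; this simultaneously identifies $\tau^*$ as optimal. On $\{\tau^*\ge T\}$ the path stays in $[x_*,x^*]$ throughout $[0,T]$, so $\sup_{0\le t\le T}\tilde f(X_t)=0$ and nothing is contributed; on $\{\tau^*<T\}$ the terms from $[0,\tau^*)$ vanish for the same reason, so $\sup_{0\le t\le T}\tilde f(X_t)=\sup_{\tau^*\le t\le T}\tilde f(X_t)$. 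Now split according to whether the exit occurs upward, $X_{\tau^*}>x^*$, or downward, $X_{\tau^*}<x_*$. In both cases $X_{\tau^*}\notin[x_*,x^*]$, so $\hat f(X_{\tau^*})\ge0$ and hence $\sup_{\tau^*\le t\le T}\hat f(X_t)\ge0$; since $\tilde f\le\hat f\vee0$ pointwise, this yields $\sup_{\tau^*\le t\le T}\tilde f(X_t)\le\sup_{\tau^*\le t\le T}\hat f(X_t)$ — i.e.\ one may drop the indicator, the discarded values all lying in $[x_*,x^*]$ where $\hat f\le0$. Applying the strong Markov property at $\tau^*$ together with the memorylessness of $T$, exactly the computation in the proof of Theorem~\ref{thm:one-sided}, gives $\hat V(x)\le\E_x\big(e^{-\beta\tau^*}\E_{X_{\tau^*}}(\sup_{0\le t\le T}\hat f(X_t))\big)=\E_x\big(e^{-\beta\tau^*}G(X_{\tau^*})\big)$, the last equality being assumption~(b)(i) since $X_{\tau^*}\notin[x_*,x^*]$. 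This completes the proof.

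The step I expect to be the main obstacle is this last one: after conditioning at $\tau^*$ one must pass from the indicator-restricted supremum along the path to the plain supremum, doing the two-sided excursion bookkeeping (which side the process exits on, and checking that the dropped terms from $[x_*,x^*]$ are harmless). This reduction, and equally the representation identity of the first paragraph, run on exactly the same fuel: the monotonicity of $\hat f$ on the two tails together with the one-sided continuity that lower semicontinuity supplies there, interacting with the c\`adl\`ag path regularity of the Hunt process. The measurability of the various suprema and the integrability needed for Lemma~\ref{lem:exc} and for the optional-sampling step are taken care of as in the one-sided case and via \eqref{cond_g}.
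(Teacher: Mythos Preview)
Your proof is correct and follows essentially the same route as the paper's: define $\hat V$ via the indicator-truncated supremum, show it is a $\beta$-excessive majorant of $G$ using Lemma~\ref{lem:exc} and conditions (a)--(b), and then obtain the reverse inequality by restricting to $\{\tau^*<T\}$ and applying the strong Markov property together with the memorylessness of $T$. The only cosmetic difference is in the reverse-inequality step: the paper passes through the identity $\sup_{0\le t\le T}\tilde f(X_t)=\hat f(M_T)\mathbf 1_{\{M_T\ge x^*\}}\vee\hat f(I_T)\mathbf 1_{\{I_T\le x_*\}}$ and then uses the pathwise bound $\hat f(\sup X)\vee\hat f(\inf X)\le\sup\hat f(X)$, whereas you bound $\sup_{\tau^*\le t\le T}\tilde f(X_t)\le\sup_{\tau^*\le t\le T}\hat f(X_t)$ directly via $\tilde f\le\hat f\vee 0$ and $\sup_{\tau^*\le t\le T}\hat f(X_t)\ge 0$; these are equivalent manoeuvres. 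Your extra care about lower versus upper semicontinuity in Lemma~\ref{lem:exc} (measurability via monotonicity in $t$, and the one-sided continuity of $\hat f$ on the tails) is a point the paper's proof simply glosses over. One small wrinkle shared by both arguments: for a general Hunt process the exit position $X_{\tau^*}$ may land exactly on $\{x_*,x^*\}$ rather than strictly outside $[x_*,x^*]$, so invoking (b)(i) at $X_{\tau^*}$ tacitly assumes it holds at the endpoints as well; this is a minor boundary issue in the theorem's formulation rather than a defect of your argument.
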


\begin{proof}
This theorem is proved similarly as Theorem \ref{thm:one-sided}. The second equality in
(\ref{2-sid}) follows from the monotonic properties of $\hat f.$ 
Letting 
$$
\hat V(x):=\E_x\Big(\sup_{0\leq t\leq
  T}\hat{f}(X_t)1_{\{X_t\not\in[x_*,x^*]\}}\Big)
$$
it is seen from Lemma \ref{lem:exc}  that $\hat V$ is
$\beta$-excessive and, by conditions (a) and (b), %(cf. the proof of Theorem \ref{thm:one-sided})
$$\hat V(x)\geq \mathbb{E}_x\Big({\rm e}^{-\beta\tau}G(X_{\tau})\Big)$$
for all stopping times $\tau\in\mathcal{M}.$ 
It remains to show that
\begin{equation}
\label{e12}
\hat V(x)\leq \mathbb{E}_x\Big({\rm e}^{-\beta\tau^*}G(X_{\tau^*})\Big).
\end{equation}
For this, consider
\begin{align*}
\hat V(x)&=\E_x\Big(\sup_{0\leq t\leq T}\hat{f}(X_t)1_{\{X_t\geq x^*\}} \vee \sup_{0\leq t\leq T}\hat{f}(X_t)1_{\{X_t\leq x_*\}};\tau^*<T\Big)\\
&=\E_x\Big(\hat{f}(M_T)1_{\{M_T\geq x^*\}}\vee\hat{f}(I_T)1_{\{I_T\leq x_*\}} ;\tau^*<T\Big)\\
&=\E_x\Big(\hat{f}(\sup_{\tau^*\leq t\leq T}X_t)\vee \hat{f}(\inf_{\tau^*\leq t\leq T}X_t) ;\tau^*<T\Big)\\
&=\E_x\Big({\rm e}^{-\beta\tau^*}\E_{X_{\tau^*}} \big(\hat{f}(\sup_{0\leq t\leq T} X_t)) \vee (\hat{f}(\inf_{0\leq t\leq T} X_t)\big)\Big),
\end{align*}
where the last equality is obtained by the strong Markov property.
%and
%$T^\circ$ is en $Exp(\beta)$-distributed random variable independent
%of $X$. 
Since 
$$
\hat{f}(\sup_{0\leq t\leq T} X_t) \vee \hat{f}(\inf_{0\leq t\leq
  T} X_t)\leq \sup_{0\leq t\leq T}\hat{f} (X_t),
$$
we have
\begin{align*}
\hat V(x)&\leq \E_x\Big({\rm e}^{-\beta\tau^*}\E_{X_{\tau^*}}\big(\sup_{0\leq t\leq T}\hat{f}(X_t)\big)\Big)\\
&=\E_x\Big({\rm e}^{-\beta\tau^*}G(X_{\tau^*})\Big).
\end{align*}
Thus the proof is complete.
\end{proof}
\begin{remark} 
The methodologies in \cite{DLU} and \cite{S} seem to be applicable only for one-sided problems (for L\'evy processes). It is remarkable that the present approach provides a clean characterization also in two-sided cases (for general Hunt processes).
\end{remark}

\subsection{Computing $\hat{f}$ }\label{sec:find_f}
To apply Theorem \ref{thm:one-sided} and \ref{thm:two-sided} for a given reward function $G$ the question arises how to find a function $\hat{f}$ fulfilling the conditions stated therein. For the related problem for underlying L\'evy processes, Surya \cite{S} gave a general way for sufficiently regular
bounded
functions $G$ using the Fourier transform, but this seems to be specific for L\'evy processes and is based on a certain assumption on the zeros of the Wiener-Hopf factors of $X$. We now discuss a more general way.

Let $G$ be a given reward function and assume that for all $x$ 
\begin{equation}\label{eq:cond}
\E_x\Big({\rm e}^{-\beta t}G(X_t)\Big)\rightarrow0\quad\mbox{as $t\rightarrow\infty$.}
\end{equation}
Note that, by dominated convergence, (\ref{eq:cond})  is fulfilled if in addition to (\ref{cond_g}) it also holds 
\begin{equation*}
\lim_{t\rightarrow\infty}{\rm e}^{-\beta t}G(X_t)=0\quad\mathbb{P}_x\text{-a.s.}
\end{equation*}
Furthermore, assume that the function $G$ is in the domain of the
extended infinitesimal generator of $X$ killed 
at $T$ (cf. \cite[VII.1]{RY}). Then, by definition, there exists a function
$\tilde{f}$ such 
that 
$$\int_{0}^{t}|\tilde{f}(X_s)|ds <\infty\quad\mathbb{P}_x\text{-a.s}$$ 
and the process
$$\left({\rm e}^{-\beta t}G(X_t)-G(X_0)+\int_0^t {\rm e}^{-\beta s}\tilde{f}(X_s)ds\right)_{t\geq 0}$$
is a $(\mathcal{F}_t,\mathbb{P}_x)$-martingale. Consequently,
\[\E_x\Big({\rm e}^{-\beta t}G(X_t)\Big)-G(x)+\E_x\left(\int_0^t{\rm e}^{-\beta s}\tilde{f}(X_s)ds\right)=0,\]
and as $t\rightarrow\infty$ we obtain using \eqref{eq:cond} 
\[G(x)=\lim_{t\rightarrow\infty}\E_x\left(\int_0^t {\rm e}^{-\beta s}\tilde{f}(X_s)ds\right). \]
Hence, if, e.g., 
\begin{equation}\label{eq:cond2}
\nonumber
\mathbb{E}_x\left(\int_{0}^{\infty} {\rm e}^{-\beta s}|\tilde{f}(X_s)|ds\right)<\infty,
\end{equation}
the Lebesgue dominated convergence theorem yields
\begin{equation}
\label{eq:cond_DLU}
G(x)=\frac{1}{\beta}\,\E_x\left(\tilde{f}(X_T)\right).
\end{equation}
Notice that for $G$ in the domain of the infinitesimal generator $A$
of $X$ we have $\tilde{f}=-(A-\beta)G$. 
In this case equation \eqref{eq:cond_DLU} reflects one of the fundamental properties of the resolvent operator.
\begin{remark}
For optimal stopping problems for L\'evy processes it was assumed in
\cite{DLU} that a representation of type \eqref{eq:cond_DLU} (see also
\eqref{Gg}) holds for $G$. Hence, the above considerations are useful
when applying the methods in \cite{DLU}.
\end{remark}

Imitating the approach for optimal stopping of L\'evy processes, we
next express $G$ as the expectation of a function of $M_T$
(cf. (\ref{GQ}) and (\ref{Gg})). Obviously, it holds 
\begin{align*}
\E_x\left(\tilde{f}(X_T)\right)=
\E_x\left(\E_x\left(\tilde{f}(X_T)|M_T\right)\right).
\end{align*}
We assume that the regular conditional probability distribution
\begin{equation}\label{conditional_dist1}
\mathbb{P}_{x}(X_T\in dy| M_T=z):=\mathbb{P}_{x}(X_T\in dy\,,\, M_T\in
dz)/\mathbb{P}_{x}(M_T\in dz)
\end{equation}
exists and has the needed regularity properties so that it holds
\begin{align}
\label{fQ}
\frac{1}{\beta}\E_x\left(\tilde{f}(X_T)\right)=\mathbb{E}_{x}\Big(Q(M_T;x)\Big),
\end{align}
where 
\begin{equation}\label{conditional_dist}
Q(z;x):=\frac{1}{\beta}\int_{-\infty}^{z}\tilde{f}(y)\mathbb{P}_{x}(X_T\in dy|M_T=z).
\end{equation}

These observations are now applied to characterize solutions for optimal
stopping problems for L\'evy processes and linear diffusions via Theorem \ref{thm:one-sided}. 
L\'evy processes are discussed first. The following lemma is well
known. We present a short proof for the sake of completeness.
\begin{lemma}
\label{levy-sup}
For a L\'evy process $X$ (which is not a subordinator or a compound Poisson
process) it holds that
\begin{equation}\label{conditional_dist2}
\mathbb{P}_{x}(X_T\in dy| M_T=z)=\mathbb{P}_{z}(I_T\in dy).
\end{equation}
In particular, the conditional distribution does not depend on $x$.
\end{lemma}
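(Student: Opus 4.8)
The plan is to exploit the Wiener--Hopf decomposition at the independent exponential time $T$ together with the duality/time-reversal structure of L\'evy processes. First I would recall the standard Wiener--Hopf factorization at $T$: one may write $X_T = \overline{X}_T + \underline{X}_T^{\,\circ}$ in distribution, where $\overline{X}_T = M_T$, the running supremum, and $\underline{X}_T^{\,\circ}$ is an independent copy of $\inf_{0\le s\le T}X_s = I_T$, both taken under $\mathbb{P}_0$; the excursion-theoretic proof of this (via the ladder-height subordinators and the lack-of-memory of $T$) also shows that, conditionally on the time of the maximum, the pre-maximum and post-maximum parts of the path are independent. The key observation is that the \emph{post-maximum} part of $(X_s)_{0\le s\le T}$, shifted to start at its beginning, behaves like the process run until $T$ conditioned to stay below $0$, which by duality (spatial homogeneity, and $-X$ having the same law as $X$ reflected) is exactly distributed as $z + I_T$ once we condition on $M_T = z$.

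More concretely, the main steps are: (1) Decompose the path at $\rho := \operatorname{argmax}_{0\le s\le T} X_s$ (a.s.\ unique since $X$ is not a compound Poisson process or a subordinator, so $0$ is regular for both half-lines or the relevant one-sided regularity holds and the maximum is attained at a unique time). (2) Use the lack of memory of $T$: conditionally on $\rho$ and on $\mathcal F_\rho$, the remaining lifetime $T-\rho$ is again exponential with parameter $\beta$ and independent of the pre-$\rho$ path. (3) By the strong Markov property applied at $\rho$ — or rather its appropriate last-exit analogue — the post-$\rho$ increments $(X_{\rho+s}-X_\rho)_{0\le s\le T-\rho}$ form a process that never exceeds $0$ on its time interval; identifying its law, one gets that $X_T - X_\rho = X_T - M_T$ is distributed as $I_{T}$ under $\mathbb{P}_0$, \emph{independently} of $M_T = X_\rho$. (4) Translating back by $M_T = z$ gives $\mathbb{P}_x(X_T\in dy\mid M_T = z) = \mathbb{P}_0(z - x + (X_T - M_T)\in dy - x)$; but since under $\mathbb{P}_x$ the supremum is $x + \overline{X}_T^{(0)}$ and $X_T - M_T$ does not depend on the starting point, this simplifies to $\mathbb{P}_z(I_T\in dy)$, and in particular the $x$-dependence drops out entirely.

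The cleanest way to package steps (1)--(3) is to cite the excursion-theoretic form of the Wiener--Hopf factorization (e.g.\ Bertoin or Kyprianou): the pair $(M_T,\ M_T - X_T)$ under $\mathbb{P}_0$ has independent coordinates, with $M_T - X_T \stackrel{d}{=} -I_T = \sup_{0\le s\le T}(-X_s)$. Then conditioning on $M_T = z$ leaves $M_T - X_T$ with its unconditional law, so $X_T = z - (M_T - X_T)$ has, conditionally, the law of $z + I_T$, i.e.\ $\mathbb{P}_z(I_T\in dy)$ after the spatial shift; independence of the coordinates is precisely the statement that this conditional law does not involve the value $z$ beyond the translation, and a fortiori does not involve the starting point $x$ (spatial homogeneity reduces the $\mathbb{P}_x$ statement to the $\mathbb{P}_0$ one).

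The main obstacle I anticipate is purely technical rather than conceptual: justifying that the relevant conditional distributions are genuinely regular and that the argmax $\rho$ is a.s.\ unique and behaves well — this is exactly where the exclusion of subordinators (no maximum structure) and compound Poisson processes (possible ties in the supremum, failure of regularity of $0$) enters. For a subordinator $M_T = X_T$ trivially and the statement is vacuous/degenerate; for compound Poisson processes the supremum may be attained on an interval and the excursion theory at the supremum needs the usual modification, which is why these cases are excluded. Given the Wiener--Hopf factorization as a black box, the rest is a short translation argument, so I would keep the proof to a few lines invoking the factorization and the lack-of-memory property, exactly as the ``short proof for the sake of completeness'' the authors promise.
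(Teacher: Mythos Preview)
Your proposal is correct and ultimately takes the same approach as the paper: the Wiener--Hopf factorization (independence of $M_T$ and $M_T-X_T\stackrel{d}{=}-I_T$ under $\mathbb{P}_0$) together with spatial homogeneity. The paper's proof is precisely the four-line joint-distribution computation you describe in your ``cleanest way'' paragraph; your earlier steps (1)--(4) involving the argmax $\rho$ and the post-maximum excursion are unnecessary scaffolding that the paper (and you, in the end) bypass by treating the factorization as a black box.
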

\begin{proof} The claim follows from the Wiener-Hopf
  factorization (\ref{WiHo}) and the spatial homogeneity of $X.$ Indeed, 
\begin{align*}
\mathbb{P}_{x}(X_T\in dy\,,\, M_T\in dz)&=\mathbb{P}_{0}(x+X_T\in
dy\,,\, x+M_T\in dz)\\
\
&=\mathbb{P}_0 (x+M_T+I^{\circ}_{T}\in dy\,,\, x+M_T\in dz)\\
&=\mathbb{P}_0 (z+I^{\circ}_{T}\in dy\,,\, x+M_T\in dz)\\
&=\mathbb{P}_z(I_{T}\in dy)\,\mathbb{P}_x(M_T\in dz).
\end{align*}
\end{proof}
\begin{proposition}
\label{levy-stop}
Consider the optimal stopping problem (\ref{OSP}) in case of 
a L\'evy process $X$ (as introduced in Lemma \ref{levy-sup}). Assume that there exists a function
$\tilde{f}$ such that (\ref{eq:cond2}) holds and 
define
\begin{equation}\label{f_hat_zero1}
Q(z):=\frac 1\beta\,\int_{-\infty}^{0}\tilde{f}(z+y)\mathbb{P}_{0}(I_T\in dy).
\end{equation}
If  $Q$ fulfils (a) in Theorem \ref{thm:one-sided} then
the value function of the optimal 
stopping problem (\ref{OSP}) is given as in (\ref{value_fct}) with $\hat{f}=Q\vee 0.$
\end{proposition}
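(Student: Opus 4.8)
The plan is to reduce Proposition \ref{levy-stop} to Theorem \ref{thm:one-sided} by verifying that the function $\hat f := Q\vee 0$ satisfies hypotheses (a) and (b) of that theorem. Hypothesis (a) is essentially assumed: the proposition explicitly requires $Q$ to satisfy (a) of Theorem \ref{thm:one-sided}, so I only need to observe that replacing $Q$ by $Q\vee 0$ does not disturb (a)(i) (where $Q\le 0$ so $Q\vee 0=0\le 0$) nor (a)(ii) (where $Q>0$ is non-decreasing so $Q\vee 0=Q$), and that upper semicontinuity of $\hat f$ follows from that of $Q$; in fact $Q$ is continuous, being an integral of $\tilde f$ against the law of $I_T$, given the integrability \eqref{eq:cond2}.

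The substantive step is hypothesis (b), i.e. showing
\[
\E_x\Big(\sup_{0\le t\le T}\hat f(X_t)\Big)=G(x)\quad\text{for }x\ge x^*,\qquad
\E_x\Big(\sup_{0\le t\le T}\hat f(X_t)\Big)\ge G(x)\quad\text{for }x\le x^*.
\]
First I would note that since $x\mapsto\hat f(x)\,1_{\{x\ge x^*\}}=\hat f(x)$ for $x\ge x^*$ and $\hat f$ is non-decreasing there, the left-hand side equals $\E_x(\hat f(M_T)\,1_{\{M_T\ge x^*\}})$ in the relevant regime, and more simply $\sup_{0\le t\le T}\hat f(X_t)=\hat f(M_T)$ whenever $M_T\ge x^*$ because $\hat f\le 0$ below $x^*$. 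So the key identity to establish is
\[
\E_x\big(Q^+(M_T)\big)=G(x),
\]
at least for $x\ge x^*$, where the $+$ is immaterial once we are on $\{M_T\ge x^*\}$ as long as we also check the contribution from $\{M_T<x^*\}$ is harmless. Combining \eqref{eq:cond_DLU}, which gives $G(x)=\frac1\beta\E_x(\tilde f(X_T))$, with the conditioning identity \eqref{fQ}–\eqref{conditional_dist} and Lemma \ref{levy-sup}, one sees that $\frac1\beta\E_x(\tilde f(X_T))=\E_x(Q(M_T;x))$ and that by spatial homogeneity $Q(z;x)=\frac1\beta\int_{-\infty}^{z}\tilde f(y)\,\mathbb P_z(I_T\in dy-z)\cdot(\cdots)$, which after the change of variable $y\mapsto z+y$ is exactly $Q(z)$ as defined in \eqref{f_hat_zero1}. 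Hence $G(x)=\E_x(Q(M_T))$ for all $x$. For $x\ge x^*$ we then have $M_T\ge x\ge x^*$ $\mathbb P_x$-a.s., so on this event $Q(M_T)\ge 0$ hence $Q(M_T)=Q^+(M_T)=\hat f(M_T)$, giving (b)(i). For $x\le x^*$, we have $\hat f(M_T)=Q^+(M_T)\ge Q(M_T)$ pointwise, so $\E_x(\hat f(M_T)1_{\{M_T\ge x^*\}})\ge\E_x(Q(M_T)1_{\{M_T\ge x^*\}})\ge\E_x(Q(M_T))=G(x)$, the middle inequality holding because $Q\le 0$ on $(-\infty,x^*)$; this is (b)(ii). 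Then Theorem \ref{thm:one-sided} applies verbatim with this $\hat f$ and yields \eqref{value_fct}.

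I expect the main obstacle to be purely the bookkeeping around the conditional distribution: justifying that the regular conditional probability $\mathbb P_x(X_T\in dy\mid M_T=z)$ used in \eqref{fQ}–\eqref{conditional_dist} is genuinely given by $\mathbb P_z(I_T\in dy)$ (Lemma \ref{levy-sup}) and that Fubini/Tonelli manipulations interchanging $\sup$, the conditioning, and the integral against $\mathbb P_0(I_T\in dy)$ are legitimate — this is exactly where the integrability hypothesis \eqref{eq:cond2} is consumed, since it controls $\E_x\int_0^\infty e^{-\beta s}|\tilde f(X_s)|\,ds<\infty$ and hence ensures \eqref{eq:cond_DLU} and the finiteness of all the expectations above. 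A secondary point worth a sentence is the standing assumption \eqref{cond_g}: one should check $\E_x(\sup_{t\ge0}e^{-\beta t}G(X_t))<\infty$ so that Theorem \ref{thm:one-sided} is applicable, which in the present setting follows since $G=\E_\cdot(Q(M_T))$ is $\beta$-excessive and dominates $G$, so the smallest $\beta$-excessive majorant is finite. With these checks in place the proof is a one-line invocation of Theorem \ref{thm:one-sided}.
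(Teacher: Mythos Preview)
Your approach is correct and essentially identical to the paper's proof: both verify that $\hat f=Q\vee 0$ inherits condition (a) from $Q$, use Lemma~\ref{levy-sup} together with \eqref{eq:cond_DLU}--\eqref{fQ} to obtain the key identity $G(x)=\E_x(Q(M_T))$, and then observe that $Q\le\hat f$ gives (b)(ii) while $M_T\ge x>x^*$ forces $Q(M_T)=\hat f(M_T)=\sup_{t\le T}\hat f(X_t)$ for (b)(i), so Theorem~\ref{thm:one-sided} applies. Your additional remarks about integrability and \eqref{cond_g} are side comments (the latter is a standing assumption, not something to derive), but they do not affect the argument.
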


\begin{proof}
Since $Q$ is assumed to fulfil (a) in Theorem \ref{thm:one-sided}
also $\hat f$ fulfils this condition. We need to verify that $\hat f$
satisfies (b) in Theorem \ref{thm:one-sided}. Definition \eqref{f_hat_zero1} of $Q$, Lemma \ref{levy-sup}, and
formula (\ref{fQ}) yield
\begin{equation}
\label{fQ2}
\nonumber
G(x)=\frac 1\beta\,\E_x\left(\tilde{f}(X_T)\right)=\mathbb{E}_{x}\Big(Q(M_T)\Big).
\end{equation}
Consequently, since $Q\leq \hat f$ we have for all $x$
\begin{equation}
\label{fQ3}
G(x)=\mathbb{E}_{x}\Big(Q(M_T)\Big)\leq \mathbb{E}_{x}\Big(\hat f(M_T)\Big)\leq
\mathbb{E}_{x}\Big(\sup_{0\leq t\leq T} \hat f(X_t)\Big).
\end{equation}
In case $x>x^*$ - since $Q$ satisfies (a) in Theorem \ref{thm:one-sided} - (\ref{fQ3}) holds with
equalities instead of inequalities, and this completes the proof. 
\end{proof}

Next we consider the optimal stopping problem (\ref{OSP}) for a
regular linear diffusion. For simplicity, it is assumed that $X$ is
conservative and takes values on the whole $\mathbb{R}.$ The following
result is extracted from  \cite[II.19]{BS}. We use the notation 
from Example \ref{diffusion}.
\begin{lemma}
\label{diffusion-sup}
For a conservative regular linear diffusion $X$ on $\mathbb{R}$  it holds that
\begin{equation}
\label{conditional_dist3}
\mathbb{P}_{x}(X_T\in dy| M_T=z)= \frac{\psi_\beta(y)\,
  m(dy)}{\int_{-\infty}^z \psi_\beta(u)m(du)},
\end{equation}
where $m$ is the speed measure. In particular, the conditional distribution does not depend on $x$.
\end{lemma}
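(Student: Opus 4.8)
The plan is to prove the conditional-distribution formula \eqref{conditional_dist3} by combining two classical facts about regular linear diffusions: first, that $M_T$ has a known distribution expressed through the scale function and the fundamental solution $\psi_\beta$; second, that given the running maximum equals $z$, the process restarted at the last time it visits $z$ before $T$ is again a copy of $X$ started at $z$ and killed at an independent exponential time, run below the level $z$. Concretely, I would first recall from \cite[II.19]{BS} the joint law of $(X_T,M_T)$ for a linear diffusion, or equivalently compute $\Pro_x(M_T\ge z)=\Pro_x(H_z<T)=\E_x({\rm e}^{-\beta H_z})$, which by \eqref{e1} equals $\psi_\beta(x)/\psi_\beta(z)$ for $x\le z$. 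Differentiating in $z$ gives the density of $M_T$ on $(x,\infty)$ up to the speed-measure normalisation; the factor $\int_{-\infty}^z\psi_\beta(u)\,m(du)$ will appear as the normalising constant because $\psi_\beta$ solves the generalised ODE $\tfrac12\tfrac{d}{dm}\tfrac{d}{dS}\psi_\beta=\beta\psi_\beta$, so $\tfrac{d}{dS}\psi_\beta(z)=2\beta\int_{-\infty}^z\psi_\beta\,dm$ (using the boundary behaviour at $-\infty$).

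Next I would establish the conditional law of $X_T$ given $M_T=z$. The key observation is the strong Markov property applied at $H_z$: on $\{M_T=z\}$ we have $H_z<T$, and conditionally on $\mathcal F_{H_z}$ the post-$H_z$ process is a diffusion started at $z$, killed at an independent $\mathrm{Exp}(\beta)$ time (memorylessness of $T$), and the event $\{M_T=z\}$ further constrains this restarted process never to exceed $z$ before that exponential time. Hence $\Pro_x(X_T\in dy\mid M_T=z)$ equals the law of $X$ started at $z$, killed at $T$, evaluated on the event that the process stays in $(-\infty,z]$ throughout $[0,T]$ — and, crucially, on that event $X_T$ coincides in law with $I_T$ run for the diffusion killed on exiting $(-\infty,z)$, which is exactly the analogue of Lemma \ref{levy-sup} for diffusions. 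The density of this killed process at time $T$ is the $\beta$-resolvent density of the diffusion on the half-line $(-\infty,z)$ with the upper endpoint $z$ made absorbing; by the standard formula (e.g.\ \cite{BS}) this resolvent density, started from $z$ itself and integrated against $m$, is proportional to $\psi_\beta(y)\,m(dy)$ for $y<z$, since $\psi_\beta$ is the increasing fundamental solution and $z$ absorbing forces the decreasing part of the kernel to be the constant one.

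Assembling these, I would write $\Pro_x(X_T\in dy\mid M_T=z)=\psi_\beta(y)\,m(dy)\big/\int_{-\infty}^z\psi_\beta(u)\,m(du)$, noting that the right-hand side manifestly does not involve $x$, which proves the ``in particular'' clause and is consistent with the L\'evy case where spatial homogeneity played the analogous role. As a sanity check I would verify that this density integrates to one over $(-\infty,z)$ — it does, by the very definition of the normalising constant — and that multiplying by $\Pro_x(M_T\in dz)$ recovers the known joint density of $(X_T,M_T)$ from \cite{BS}.

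The main obstacle is the regularity/measure-theoretic bookkeeping: the speed measure $m$ may have atoms and the scale function $S$ need not be smooth, so ``differentiating $\Pro_x(M_T\ge z)$ in $z$'' must be phrased via the generalised derivative $d/dS$ and the relation $\tfrac{d}{dS}\psi_\beta=2\beta\int\psi_\beta\,dm$, and one must be careful that the regular conditional distribution in \eqref{conditional_dist1} genuinely exists and agrees with the pathwise strong-Markov description above — this is where invoking the specific computation of \cite[II.19]{BS} rather than re-deriving it is the cleanest route. A secondary subtlety is justifying that, on $\{M_T=z\}$, the level $z$ is actually attained (so $H_z<T$ and the restart argument applies), which uses regularity of the diffusion and the assumption that $X$ is conservative on all of $\mathbb R$; for a non-regular or one-sided diffusion the statement would need modification.
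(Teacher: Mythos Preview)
The paper does not actually prove this lemma: the sentence preceding it reads ``The following result is extracted from \cite[II.19]{BS},'' and no argument is given. Your proposal is therefore strictly more than what the paper does, and you yourself identify the shortcut the authors take when you write that ``invoking the specific computation of \cite[II.19]{BS} rather than re-deriving it is the cleanest route.'' In that sense your proposal and the paper agree.

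That said, your sketch of an independent proof is sound in outline. The two ingredients you isolate --- the identity $\Pro_x(M_T\ge z)=\psi_\beta(x)/\psi_\beta(z)$ and the post-$H_z$ restart argument combined with the resolvent of the diffusion killed above $z$ --- are exactly what underlies the formulae in \cite[II.19]{BS}, and indeed the paper itself carries out this kind of computation explicitly in Section~3.4 (absorbing Brownian motion), writing $\Pro_x(M_T\in dz)/dz=\partial_z(\psi_\beta(x)/\psi_\beta(z))$ and $\Pro_x(M_T\in dz,X_T\in dy)=2\psi_\beta(x)\psi_\beta(y)\psi_\beta(z)^{-2}\,dz\,dy$, then dividing. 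Your caveat about conditioning on the null event $\{M_T=z\}$ is well taken: the clean way is precisely this ratio of densities, not a pathwise argument, and the ODE identity $\tfrac{d}{dS}\psi_\beta(z)=2\beta\int_{-\infty}^z\psi_\beta\,dm$ you invoke is what makes the normalising constant come out as stated. So nothing is wrong; you have simply supplied the derivation that the paper delegates to a reference.
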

\noindent The next result can now be proved analogously as Proposition
\ref{levy-stop}.

\begin{proposition}
\label{diffusion-stop}
Consider the optimal stopping problem (\ref{OSP}) for 
a diffusion $X$ as introduced above. Assume that there exists a function
$\tilde{f}$ such that (\ref{eq:cond2}) holds and 
define
\begin{equation}\label{f_hat_zero2}
Q(z):=\frac 1{\beta}\,\frac{\int_{-\infty}^{z}\tilde{f}(y) {\psi_\beta(y)\,
  m(dy)}}{\int_{-\infty}^z \psi_\beta(y)\,m(dy)}.
\end{equation}
If  $Q$ fulfils (a) in Theorem \ref{thm:one-sided} then
the value function of the optimal 
stopping problem (\ref{OSP}) is given as in (\ref{value_fct}) with $\hat{f}=Q\vee 0.$
\end{proposition}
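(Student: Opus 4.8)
The plan is to mirror the proof of Proposition \ref{levy-stop}, replacing the Wiener--Hopf identity (\ref{conditional_dist2}) with the diffusion conditional law (\ref{conditional_dist3}) from Lemma \ref{diffusion-sup}. First I would observe that, since $Q$ is assumed to satisfy condition (a) of Theorem \ref{thm:one-sided}, the function $\hat f := Q\vee 0$ automatically satisfies (a) as well: on $\{x\le x^*\}$ we have $Q(x)\le 0$ so $\hat f(x)=0\le 0$, while on $\{x>x^*\}$ the positivity of $Q$ gives $\hat f=Q$, which is positive and non-decreasing there. So the only thing to check is condition (b).

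Next I would establish the integral representation $G(x)=\E_x(Q(M_T))$. Starting from the hypothesis (\ref{eq:cond2}) together with the extended-generator discussion preceding the proposition, we get the resolvent-type identity $G(x)=\frac1\beta\E_x(\tilde f(X_T))$ (this is (\ref{eq:cond_DLU})). Then I would condition on $M_T$ and invoke Lemma \ref{diffusion-sup}: since the conditional distribution $\mathbb{P}_x(X_T\in dy\mid M_T=z)$ equals $\psi_\beta(y)m(dy)\big/\int_{-\infty}^z\psi_\beta(u)m(du)$ and does not depend on $x$, plugging into (\ref{conditional_dist}) shows that $Q(z;x)$ reduces exactly to the $x$-independent function $Q(z)$ defined in (\ref{f_hat_zero2}). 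Formula (\ref{fQ}) then gives $G(x)=\E_x(Q(M_T))$ for every $x\in\mathbb{R}$.

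From here the argument is identical to Proposition \ref{levy-stop}. Since $Q\le \hat f$ pointwise and $\hat f(M_T)\le \sup_{0\le t\le T}\hat f(X_t)$ (as $M_T=\sup_{0\le t\le T}X_t$ is attained and $\hat f$ evaluated there is one of the values in the supremum), we obtain the chain
\begin{equation*}
G(x)=\E_x\big(Q(M_T)\big)\le \E_x\big(\hat f(M_T)\big)\le \E_x\Big(\sup_{0\le t\le T}\hat f(X_t)\Big),
\end{equation*}
which is exactly part (b)(ii) of Theorem \ref{thm:one-sided}. For $x>x^*$ the monotonicity structure in (a) forces $\hat f(M_T)=Q(M_T)$ on $\{M_T\ge x^*\}$ and the relevant indicator arguments make the two inequalities above into equalities on that range, giving (b)(i). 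An application of Theorem \ref{thm:one-sided} then yields the stated value function and optimal stopping time.

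The step requiring the most care is the passage from the extended-generator martingale to $G(x)=\frac1\beta\E_x(\tilde f(X_T))$ and then the conditioning computation that collapses $Q(z;x)$ to the explicit $Q(z)$ of (\ref{f_hat_zero2}): one must justify the use of Fubini/dominated convergence under hypothesis (\ref{eq:cond2}), and one must verify that the regular conditional distribution in (\ref{conditional_dist3}) genuinely has the regularity needed for (\ref{fQ}). Both points are routine given the cited material in \cite{BS} and the standing assumptions (\ref{cond_g}), (\ref{eq:cond}), so I would state them briefly and refer back to the general discussion in Section \ref{sec:find_f} rather than reproving them. Everything else is a verbatim transcription of the L\'evy proof with $\mathbb{P}_z(I_T\in dy)$ replaced by $\psi_\beta(y)m(dy)/\int_{-\infty}^z\psi_\beta m$.
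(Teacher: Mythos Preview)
Your proposal is correct and follows exactly the approach the paper intends: the paper's own proof consists of the single sentence ``proved analogously as Proposition~\ref{levy-stop},'' and your write-up is precisely that analogy, with Lemma~\ref{diffusion-sup} playing the role of Lemma~\ref{levy-sup}. If anything, you have supplied more detail than the paper does.
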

To apply Proposition \ref{diffusion-stop} it is useful to have
condition on hand to guarantee that $Q$ fulfills (a) in 
Theorem \ref{thm:one-sided}. The following Lemma gives such a
condition in 
terms of the function $\tilde{f}$ which is known explicitly in many situations of interest.
\begin{lemma}
\label{lemma:diffusion-stop}
In the setting of Proposition \ref{diffusion-stop} assume that there
exists $\tilde{x}$ 
such that $\tilde{f}(x)\leq 0$ for $x\leq \tilde{x}$ and
$\tilde{f}(x)$ is non-decreasing for $x\geq \tilde{x}$.  
Then $Q\leq 0$ or there exists $x^*$ such that $Q$ fulfils (a) in Theorem \ref{thm:one-sided}.
\end{lemma}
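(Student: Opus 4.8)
I would read $Q$ as a running $\mu$-average. Put $\mu(dy):=\psi_\beta(y)\,m(dy)$; under the hypotheses behind Proposition~\ref{diffusion-stop} this is a positive measure with $0<D(z):=\mu\big((-\infty,z)\big)<\infty$ for every $z$ (finiteness and positivity of $\int_{-\infty}^{z}\psi_\beta\,dm$ are already implicit in Lemma~\ref{diffusion-sup}), and $\int_{(-\infty,z)}|\tilde f|\,d\mu<\infty$ by the standing assumption $(\ref{eq:cond2})$ (via $\Pro_x(X_T\in dy)=\beta u_\beta(x,y)\,m(dy)$ and positivity of $u_\beta$). With $N(z):=\int_{(-\infty,z)}\tilde f\,d\mu$ we have $\beta Q(z)=N(z)/D(z)$, i.e.\ $\beta Q(z)$ is the $\mu$-average of $\tilde f$ over $(-\infty,z)$. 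The first step is the trivial remark that $Q(z)\le0$ for $z\le\tilde x$, since there $\tilde f\le0$ throughout the range of integration. Hence, unless $Q\le0$ identically, I would set $x^*:=\inf\{z:Q(z)>0\}$, note $x^*\in[\tilde x,\infty)\subset S$, and devote the rest of the argument to checking that this $x^*$ satisfies (a) of Theorem~\ref{thm:one-sided}.

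The crucial step is a pointwise comparison of $\tilde f$ with its own average: I would show that \emph{$Q(z_0)>0$ implies $z_0>\tilde x$, $\tilde f(z_0)>0$ and $\tilde f(z_0)\ge\beta Q(z_0)$}. Since $\tilde f\le0$ on $(-\infty,\tilde x]$, a positive average forces $z_0>\tilde x$ and $\int_{(\tilde x,z_0)}\tilde f\,d\mu>0$, so $\tilde f>0$ on a set of positive $\mu$-measure in $(\tilde x,z_0)$; monotonicity of $\tilde f$ on $[\tilde x,\infty)$ then gives $\tilde f(z_0)>0$. The same monotonicity, together with $\tilde f\le0\le\tilde f(z_0)$ on $(-\infty,\tilde x]$, shows $\tilde f(y)\le\tilde f(z_0)$ for every $y<z_0$, whence $\beta Q(z_0)=N(z_0)/D(z_0)\le\tilde f(z_0)$. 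This is the only point at which the particular shape of $\tilde f$ is used, and I expect it to be the main obstacle.

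Next I would propagate this to the right by the elementary mediant inequality: \emph{if $z_0\ge\tilde x$ and $\tilde f(z_0)\ge\beta Q(z_0)$, then $Q(z_2)\ge Q(z_0)$ for all $z_2\ge z_0$}. Indeed $\beta Q(z_2)=\big(N(z_0)+\int_{[z_0,z_2)}\tilde f\,d\mu\big)/\big(D(z_0)+\mu([z_0,z_2))\big)$; if $\mu([z_0,z_2))=0$ this equals $\beta Q(z_0)$, and otherwise it is a convex combination of $\beta Q(z_0)=N(z_0)/D(z_0)$ and of the $\mu$-average of $\tilde f$ over $[z_0,z_2)$, the latter being $\ge\tilde f(z_0)\ge\beta Q(z_0)$ by monotonicity of $\tilde f$ on $[\tilde x,\infty)$; hence the combination is $\ge\beta Q(z_0)$. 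Combining the two displayed statements: whenever $Q(z_0)>0$, the function $Q$ is non-decreasing on $[z_0,\infty)$ and therefore stays strictly positive there. This is the rigorous, smoothness-free substitute for the heuristic $Q'(z)\propto\tilde f(z)-\beta Q(z)$.

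It remains to assemble condition (a). For $z>x^*$ the definition of the infimum supplies $z_0\in[x^*,z)$ with $Q(z_0)>0$, so $Q(z)\ge Q(z_0)>0$; thus $Q>0$ on $(x^*,\infty)$, and for $x^*<z_3<z_4$ the same reasoning applied at $z_3$ gives $Q(z_3)\le Q(z_4)$, i.e.\ $Q$ is non-decreasing on $(x^*,\infty)$. Finally $Q\le0$ on $(-\infty,x^*)$ by definition of $x^*$, while $Q(x^*)\le0$ follows from left-continuity of $Q$: $N$ and $D$ are left-continuous (monotone convergence for $\tilde f^+$ and $\tilde f^-$ along $(-\infty,z_n)\uparrow(-\infty,x^*)$), so $Q(x^*)>0$ would force $Q>0$ on a left neighbourhood of $x^*$, contradicting $x^*=\inf\{Q>0\}$. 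Possible atoms of the speed measure enter the argument only through this one-sided continuity and cause no real difficulty. Therefore $Q$ satisfies (a) of Theorem~\ref{thm:one-sided}, as claimed.
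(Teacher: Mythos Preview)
Your proof is correct and rests on the same core observation as the paper's: whenever $Q(z)>0$ one has $\tilde f(z)\ge\beta Q(z)$, equivalently $\int_{-\infty}^{z}(\tilde f(z)-\tilde f(y))\,\psi_\beta(y)\,m(dy)\ge0$. The paper, assuming for simplicity that $m$ is absolutely continuous, simply differentiates $Q$ and reads off $Q'\ge0$ from this inequality. You instead work with a general speed measure and replace the derivative computation by the mediant inequality, deducing monotonicity of the running $\mu$-average directly from the pointwise bound $\tilde f\ge\beta Q$. This buys you some generality (atoms of $m$ are handled) and a bit more rigour---you explicitly verify $\tilde f(z)>0$ for $z>x^*$ and check $Q(x^*)\le0$ via left-continuity of $N$ and $D$, points the paper leaves implicit---at the cost of a somewhat longer argument.
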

\begin{proof}
For simplicity we assume that the measure $m$ is absolutely continuous, i.e. $m(dy)=m(y)dy$, and let
\[x^*=\inf\{z:\int_{-\infty}^{z}\tilde{f}(y) \psi_\beta(y)\,m(y)dy >0\}.\]
If $x^*=\infty$, then the claim trivially holds. Therefore, we assume
$x^*<\infty$. It remains to prove that $Q$ is non-decreasing for
$x>x^*$. 
Obviously $x^*>\tilde{x}$ and therefore by the assumption on $\tilde{f}$ we get
\[
\int_{-\infty}^z(\tilde{f}(z)-\tilde{f}(y)) \psi_\beta(y)\,m(y)dy\geq
0.
\]
Using this fact, by differentiating the function $Q$ we immediately see that $\frac{d}{dx}Q(x)\geq 0$.
\end{proof}

\begin{remark}
\label{path-decomp}
It is critical that the function $Q$ defined in (\ref{f_hat_zero1})
for L\'evy processes and in (\ref{f_hat_zero2}) for diffusions does not
depend on $x.$ This clearly follows from the fact that the
conditional distribution of $X_T$ given $M_T$ does not depend on the
initial state  of the process. Path decomposition theorems for strong
Markov processes imply such results in general, see Millar \cite{millar78}.
This observation yields a method for finding $\hat{f}$ for general underlying processes.
\end{remark}

\subsection{Connection to the representing measure approach}
Another approach to the solution of optimal stopping problems is the observation that finding the solution of such problems is equivalent to finding the Choquet-type integral representation of the value function in terms of the resolvent kernel $u_\beta(x,y)$.  The Radon measure $\sigma$ that appears in this representation characterizes the $\beta$-excessive value function $V$, and furthermore, the stopping region can be described as the support of $\sigma$.
 
This approach was first discussed in \cite{salminen85} for optimal stopping problems of diffusions. In \cite{MS} a verification theorem based on this approach applicable for general Hunt processes is presented. Moreover, in \cite{MS} the appearance of a function of the maximum in the solution of optimal stopping problems for L\'evy processes is explained via this approach and the Wiener-Hopf factorization.   

The critical point to utilize the approach is to find a candidate
solution for the measure $\sigma$. In \cite{Sa}, this was carried out
explicitly for spectrally positive L\'evy processes in the
Novikov-Shiryaev problem in terms of 
the Appell polynomials of $X_T$ using the Laplace transforms. In
Proposition \ref{prop:repr_measure}  we extend these results by pointing out a connection between this approach and our approach in this paper for a general Hunt process $X$ which does not have negative jumps, that is, 
\begin{equation}\label{eq:one-sided}
\Pro_x\Big(\inf_{t\geq 0}(X_t-X_{t-})\geq 0\Big)=1\ \ \forall x\in S.
\end{equation}
%Here, regular means
%\[\Pro_x(T_y<\infty)>0\mbox{~~for all }x,y\in S,\]
%where $T_y=\inf\{t>0:X_t=y\}$. 
%In Proposition \ref{repr_meas_levy} spectrally one-sided L\'evy
%processes are discussed. 

We work in the setting of Hunt processes satisfying the set of assumptions 
as given in \cite[Section 2]{MS}. In particular, it is assumed that there exists a dual resolvent with respect to a duality measure $m$ and that Hypothesis (B) from \cite{KW} holds.
Moreover, we assume that the Laplace transform of the first hitting time $H_x:=\inf\{t: X_t=x\}$ admits for all $x_0< x$ the representation.
\begin{equation}\label{hit}
\E_x\big({\rm e}^{-\beta{H_{x_0}}}\big)=\frac{u_{\beta}(x,x_0)}{u_{\beta}(x_0,x_0)}\ \ \forall x_0<x\in S,
\end{equation}
where $T_x=\inf\{t>0:X_t=x\}$.
This representation is valid for a large class of processes. A sufficient condition is that (I) and (II) below hold true, see \cite[Chapter V.3 and Exercise VI.4.18]{BG}.
\begin{itemize}
\item[{ (I)}]\label{I} The mapping $x\mapsto \int_S u_\beta(x,y) f(y)m(dy)$ is continuous, where $f$ is a bounded and measurable function with compact support.
\item[{ (II)}]\label{II} Every point $x\in S$ is regular for $\{x\}$, that is 
\begin{equation}\label{eq:regular}
\Pro_{x}(T_{x}=0)>0,
\end{equation}
where $T_x=\inf\{t>0:X_t=x\}$.
\end{itemize}
Assumption (I) is (4.1) in \cite[p. 284]{BG}, and assumption (II) 
implies that $X$ has a local time at every point of 
the state space. Condition (\ref{hit}) furthermore holds true whenever $X$ is a spectrally positive L\'{e}vy process, see \cite[Theorem 3.12, Corollary 8.9]{Kyp}.

%These assumptions are especially needed to be able
%to express the Laplace transform of $T_x$ in terms of the resolvent
%kernel, see (\ref{hit}) in Appendix where also the proofs of these two
%propositions are given.
%ta

Note that, in case  $X$  have positive jumps, overshoot occurs in the problem. 
Nonetheless it is possible to find an explicit representation of the
measure $\sigma.$ 
%For the sake of readability, the proofs of the following two propositions are postponed to Appendix.

\begin{proposition}\label{prop:repr_measure}
Let $X$ be a Hunt process as described above with no negative jumps that fulfills condition (\ref{hit}). Assume that for the optimal stopping problem (\ref{OSP}) there exists a function $\tilde{f}$ such that (\ref{eq:cond2}) holds. Moreover, assume that the function 
\[Q(z):=\frac{1}{\beta}\int_{-\infty}^{z}\tilde{f}(y)\mathbb{P}(X_T\in dy|M_T=z)\]
 fulfills (a) in Theorem \ref{thm:one-sided}. Then the value function
 $V$ of (\ref{OSP}) has the representation
\begin{align*}
V(x)=\int_{[x^*,\infty)\cap S}u_{\beta}(x,y)\sigma(dy)\ \ \forall\ x\in S,
\end{align*}
where the measure $\sigma$ is given by
\begin{equation}
\label{measure1}
\sigma(dy)=\left\{
\begin{array}{rl}
 \tilde{f}(y)m(dy),&\text{for}\quad y>x^*,\\
%{\psi_{\beta}(x)}/{\psi_{\beta}(y)},&\text{if}\quad x < y
0,&\text{for}\quad  y<x^*,
\end{array}\right\}
\end{equation}
and
\begin{equation}
\label{measure2}
\sigma(\{x^*\})=\frac{\E_{x^*}\Big(\tilde{f}(X_T)1_{\{X_T\leq x^*\}}\Big)}{\beta u_{\beta}(x^*,x^*)}.
\end{equation}
%here
In the particular case that $X$ is a spectrally positive L\'evy process, then $\sigma(\{x^*\})=0$, and 
\[V(x)=E_x\big(\tilde{f}(X_T)1_{\{X_T> x^*\}}\big).\]
%\end{enumerate}
\end{proposition}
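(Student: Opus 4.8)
The plan is to connect the Choquet-type representation $V(x)=\int u_\beta(x,y)\,\sigma(dy)$ with the expected-supremum representation $V(x)=\E_x(\hat f(M_T)1_{\{M_T\geq x^*\}})$ coming from Theorem~\ref{thm:one-sided} (which applies since $Q$ fulfils (a) there and Proposition~\ref{diffusion-stop}-style reasoning shows it fulfils (b), with $\hat f=Q\vee 0$). The key observation exploiting the no-negative-jumps hypothesis is that the process enters $[x^*,\infty)$ continuously, so $X_{\tau^*}=x^*$ on $\{\tau^*<\infty\}$ and, more importantly, the running maximum $M_T$ of a spectrally negative-free process has a simple structure: starting from $x<x^*$, the event $\{M_T\geq x^*\}$ coincides with $\{H_{x^*}<T\}$ and on this event $M_T$ is distributed as the running maximum of the process started afresh at $x^*$. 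This lets us decompose, via the strong Markov property at $H_{x^*}$ (as in the proof of Theorem~\ref{thm:one-sided}),
\[
V(x)=\E_x\big({\rm e}^{-\beta H_{x^*}}\big)\,V(x^*)=\frac{u_\beta(x,x^*)}{u_\beta(x^*,x^*)}\,V(x^*)\qquad\text{for }x\le x^*,
\]
using \eqref{hit}. Thus the whole value function for $x\le x^*$ is a multiple of $u_\beta(\cdot,x^*)$, which forces the restriction of $\sigma$ to $\{y\le x^*\}$ to be the point mass at $x^*$ with weight $\sigma(\{x^*\})=V(x^*)/u_\beta(x^*,x^*)$; matching this with $V(x^*)=\E_{x^*}(\hat f(M_T)) = \E_{x^*}(\tilde f(X_T)1_{\{X_T\le x^*\}})/\beta + \E_{x^*}(\tilde f(X_T)1_{\{X_T>x^*\}})/\beta$ and peeling off the part already accounted for by the absolutely continuous piece of $\sigma$ on $(x^*,\infty)$ will give formula \eqref{measure2}.

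For the absolutely continuous part on $(x^*,\infty)$, I would use the generator/resolvent identity established in Section~2.2: by \eqref{eq:cond_DLU}, $G(x)=\frac1\beta\E_x(\tilde f(X_T))=\int_S u_\beta(x,y)\tilde f(y)\,m(dy)$, i.e.\ $G$ itself has Choquet density $\tilde f(y)m(dy)$ against the resolvent kernel. On the stopping region $[x^*,\infty)$ we have $V=G$, and since $X$ has no negative jumps it hits every level above $x^*$ continuously, so the restriction of the representing measure of $V$ to $(x^*,\infty)$ must coincide with that of $G$ there; this yields $\sigma(dy)=\tilde f(y)m(dy)$ for $y>x^*$ and $\sigma((-\infty,x^*))=0$. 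Combining the two parts and checking that $\int_{[x^*,\infty)}u_\beta(x,y)\sigma(dy)$ reproduces $V$ both for $x\le x^*$ (where it reduces to $u_\beta(x,x^*)\sigma(\{x^*\})+\int_{x^*}^\infty u_\beta(x,y)\tilde f(y)m(dy)$) and for $x\ge x^*$ (where $V=G$ and the identity is the resolvent formula plus the hitting-probability normalisation) completes the argument.

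For the spectrally positive L\'evy special case, the extra input is that such a process creeps upward (has no positive jumps in the reflected sense — more precisely, for spectrally positive $X$ the supremum process increases continuously and $\Pro_{x^*}(X_T\le x^*\text{ and }M_T=x^*)$ carries the relevant mass), so that $H_{x^*}$ is reached by a continuous upcrossing and, combined with the fact that points are irregular for a spectrally positive L\'evy process from below in the way needed, one gets $\E_{x^*}(\tilde f(X_T)1_{\{X_T\le x^*\}})=0$; hence $\sigma(\{x^*\})=0$ and $V(x)=\int_{(x^*,\infty)}u_\beta(x,y)\tilde f(y)m(dy)=\E_x(\tilde f(X_T)1_{\{X_T>x^*\}})$ by \eqref{eq:cond_DLU} again. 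I expect the main obstacle to be the uniqueness/rigidity step: justifying that a $\beta$-excessive function which equals a multiple of $u_\beta(\cdot,x^*)$ on $(-\infty,x^*]$ and equals $G$ (with its known Choquet density) on $[x^*,\infty)$ has exactly the claimed representing measure, with the correct single atom at $x^*$ and nothing below it. This needs the full force of the standing assumptions from \cite[Section~2]{MS} (duality, Hypothesis~(B), local times at every point via (II), continuity of resolvents via (I)), together with the no-negative-jumps property to ensure the boundary point $x^*$ is entered continuously so that only a point mass — not a more singular contribution — can sit there.
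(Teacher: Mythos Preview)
Your plan has the role of the no-negative-jumps hypothesis backwards, and this breaks the core of the argument. A process with no \emph{negative} jumps may still jump \emph{up}; hence starting from $x<x^*$ it can overshoot the level $x^*$, so in general $X_{\tau^*}\neq x^*$ and the event $\{M_T\geq x^*\}$ does \emph{not} coincide with $\{H_{x^*}<T\}$. Consequently the identity $V(x)=\E_x({\rm e}^{-\beta H_{x^*}})V(x^*)$ for $x\le x^*$ is false in this setting, and indeed condition~\eqref{hit} is stated only for $x_0<x$, i.e.\ for hitting a \emph{lower} level from above (which is the direction in which the process moves continuously). Your ``uniqueness/rigidity'' worry is therefore not the obstacle; the formula you plan to derive on $\{x\le x^*\}$ is itself incorrect. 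The same confusion recurs in the L\'evy case: a spectrally positive process creeps \emph{downward}, not upward.

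The paper's proof exploits no-negative-jumps in the correct direction. Starting from $V(x)=\E_x(Q(M_T)1_{\{M_T\geq x^*\}})$ one substitutes the definition of $Q$ as a conditional expectation to get $V(x)=\frac1\beta\E_x(\tilde f(X_T)1_{\{M_T\geq x^*\}})$, and splits according to whether $X_T>x^*$ or $X_T\le x^*$. The first piece is directly $\int_{(x^*,\infty)}u_\beta(x,y)\tilde f(y)\,m(dy)$. For the second piece one observes that on $\{M_T\geq x^*,\,X_T\le x^*\}$ the path has been above $x^*$ and has returned below it; since downward motion is continuous, it must have \emph{hit} $x^*$ on the way down, so this event equals $\{T_{x^*}<T,\,X_T\le x^*\}$. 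The strong Markov property at $T_{x^*}$ together with \eqref{hit} then produces the term $\dfrac{u_\beta(x,x^*)}{\beta\,u_\beta(x^*,x^*)}\,\E_{x^*}(\tilde f(X_T)1_{\{X_T\le x^*\}})$, giving \eqref{measure1}--\eqref{measure2}. Non-negativity of $\sigma$ is then obtained by comparing with the unique Riesz decomposition of the $\beta$-excessive function $V$. For the spectrally positive L\'evy case one uses that $-I_T$ is exponential and the Wiener-Hopf factorization to compute $\E_{x^*}(\tilde f(X_T)1_{\{X_T\le x^*\}})$ explicitly, reducing it to (a multiple of) $Q(x^*)=0$; no creeping argument is involved.
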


\begin{proof}%[Proof of Proposition \ref{prop:repr_measure}]
 We remark first that similarly as in the proof of Proposition
 \ref{levy-stop} it can be verified that the value function $V$  of
 the problem satisfies the equality
$$
V(x)=\E_x\left(Q(M_T)1_{\{M_T\geq x^*\}}\right),
$$
where $x^*$ is the point associated with $Q$ via (a) in Theorem \ref{thm:one-sided}.
Now we have 
\begin{align*}
V(x)&=\E_x\Big(Q(M_T)1_{\{M_T\geq x^*\}}\Big)=\frac{1}{\beta}\E_x\Big(\E_x\Big(\tilde{f}(X_T)|M_T\Big)1_{\{M_T\geq x^*\}}\Big)\\
&=\frac{1}{\beta}\E_x\Big(\E_x\Big(\tilde{f}(X_T)1_{\{M_T\geq x^*\}}|M_T\Big)\Big)=\frac{1}{\beta}\E_x\Big(\tilde{f}(X_T)1_{\{M_T\geq x^*\}}\Big)\\
&=\frac{1}{\beta}\E_x\Big(\tilde{f}(X_T)1_{\{X_T> x^*\}}\Big)+\frac{1}{\beta}\E_x\Big(\tilde{f}(X_T)1_{\{M_T\geq x^*,X_T\leq x^*\}}\Big).
\end{align*}
Since it is assumed that $X$ moves continuously down we rewrite the second summand
\begin{align*}
\E_x\Big(\tilde{f}(X_T)1_{\{M_T\geq x^*,X_T\leq x^*\}}\Big)&=\E_x\Big(\tilde{f}(X_T)1_{\{T_{x^*}<T,X_T\leq x^*\}}\Big),
\end{align*}
and, by the strong Markov property,
\begin{align*}
\E_x\left(\tilde{f}(X_T)1_{\{M_T\geq x^*, X_T\leq x^*\}}\right)&=\E_x\left(\E_x\left(\tilde{f}(X_T)1_{\{X_T\leq x^*\}}|\mathcal{F}_{T_{x^*}}\right)1_{\{T_{x^*}<T\}}\right)\\
&=\E_x\left({\rm e}^{-\beta{T_{x^*}}}\E_{X_{T_{x^*}}}\left(\tilde{f}(X_T)1_{\{X_T\leq x^*\}}\right)\right)\\
&=\E_{x^*}\left(\tilde{f}(X_T)1_{\{X_T\leq x^*\}}\right)\E_x\left({\rm e}^{-\beta{T_{x^*}}}\right).
\end{align*}
Letting $c=\E_{x^*}\big(\tilde{f}(X_T)1_{\{X_T\leq x^*\}}\big)$ and using assumption \eqref{hit} we obtain
\begin{align*}
V(x)&=\frac{1}{\beta}\E_x\Big(\tilde{f}(X_T)1_{\{X_T> x^*\}}\Big)+\frac{1}{\beta}\E_x\Big(\tilde{f}(X_T)1_{\{M_T\geq x^*,X_T\leq x^*\}}\Big)\\
&=\int_{(x^*,\infty)\cap S}\tilde{f}(y)u_{\beta}(x,y)m(dy)+\frac{c}{\beta u_{\beta}(x^*,x^*)}u_{\beta}(x,x^*)\\
&=\int_{[x^*,\infty)\cap S}u_{\beta}(x,y)\sigma(dy)
\end{align*}
with $\sigma$ as in (\ref{measure1}) and in (\ref{measure2}). It remains to prove that $\sigma$ is indeed non-negative. To this end, we use the fact that each $\beta$-excessive function $u$ locally integrable with respect to the duality measure $m$ can be decomposed uniquely in the form
\begin{equation}\label{eq:decomposition}
u(x)=\int u_{\beta}(x,y)\sigma_u (dy) + h_u(x),
\end{equation}
where $\sigma_u$ is a non-negative Radon measure and $h_u$ is a $\beta$-harmonic function, see \cite[Theorem 2 p. 505 and Proposition 13.1 p. 523]{KW}. Since the value function $V$ is $\beta$-excessive we may apply \eqref{eq:decomposition} to obtain
\[\int u_{\beta}(x,y)\sigma_V (dy) + h_V(x)=V(x)=\int_{[x^*,\infty)\cap S}u_{\beta}(x,y)\sigma(dy),\]
where $\sigma_V$ is the non-negative Radon measure associated with $V$. By the Hahn decomposition we can write $\sigma-\sigma_V=\sigma^+-\sigma^-$ for some non-negative measures $\sigma^+$ and $\sigma^-$. Therefore,
\[h_V(x)+\int u_{\beta}(x,y)\sigma^- (dy)=\int u_{\beta}(x,y)\sigma^+(dy).\]
By the uniqueness of the decomposition in \eqref{eq:decomposition} we obtain $h_V=0$ and $\sigma^+=\sigma^-$. Hence, $\sigma=\sigma_V$ is indeed a non-negative measure. This proves the first claim.\\
\indent
For spectrally positive L\'{e}vy processes it is well known that $-I_T$ is $Exp(\lambda)$-distributed, where $\lambda$ is the unique positive root of the equation
\[\mathbb{E}({\rm e}^{\lambda X_1})={\rm e}^{\beta}.\]
Therefore, by the definition of $Q$ and Proposition \ref{conditional_dist2} we see that $Q$ is continuous. In particular, $Q(x^*)=0$. Using the Wiener-Hopf factorization \eqref{WiHo} we obtain
\begin{align*}
\E_{x^*}\big(\tilde{f}(X_T)1_{\{X_T\leq x^*\}}\big)&=\E_{0}\big(\tilde{f}(x^*+M_T+I^{\circ}_{T})1_{\{x^*+M_T+I^{\circ}_{T}\leq x^*\}}\big)\\
&=\int_{0}^{\infty}\E_{0}\big(\tilde{f}(x^*+t+I^{\circ}_{T})1_{\{t+I^{\circ}_{T}\leq 0\}}\big)\Pro(M_T\in dt).
\end{align*}
Therefore, $\E_{x^*}\big(\tilde{f}(X_T)1_{\{X_T<x^*\}}\big)=0$, since
\begin{align*}
\E_{0}\big(\tilde{f}(x^*+t+I^{\circ}_{T})1_{\{t+I^{\circ}_{T}\leq 0\}}\big)&=\int_{-\infty}^{-t}\tilde{f}(x^*+t+s)\lambda {\rm e}^{\lambda s}ds\\
&= {\rm e}^{-\lambda t}\int_{-\infty}^{0}\tilde{f}(x^*+y)\lambda {\rm e}^{\lambda y}dy\\
&={\rm e}^{-\lambda t}Q(x^*)=0.
\end{align*}
Consequently, 
\[\sigma(\{x^*\})=\frac{\E_{x^*}\Big(\tilde{f}(X_T)1_{\{X_T\leq x^*\}}\Big)}{\beta u_{\beta}(x^*,x^*)}=0.\]
\end{proof}

\begin{remark}
By inspecting the previous proof one can see that assumption \eqref{hit} can be weakened by assuming that \eqref{hit} holds for $x_0=x^*$ only.
\end{remark}

\section{Examples} \label{ex}
In this section we demonstrate the applicability of the above theory
by solving some optimal stopping problems for diffusions, L\'evy
processes, and continuous time Markov chains.

\subsection{Ornstein-Uhlenbeck process}% Novikov-Shiryaev problem for an OU process}
Let $X$ be an Ornstein-Uhlenbeck diffusion with parameter $\gamma>0$, i.e. the differential operator associated to $X$ is 
\[Af=\frac{1}{2}\frac{d^2}{dx^2}f-\gamma x\frac{d}{dx}f.\]
We consider the optimal stopping problem (\ref{OSP}) for $X$ with the reward function
\[G_n(x)={(x^+)}^n,\;n\in \N.\]
For general L\'evy-driven Ornstein-Uhlenbeck processes it was proved in \cite{CIN} that the optimal stopping rule is one-sided, but no explicit way for finding the boundary explicitly in general was given there. Here we want characterize the optimal boundary for the Ornstein-Uhlenbeck diffusion using our approach. We have
\[(\beta-A)x^n=x^{n-2}\left((\beta+\gamma n)x^2-\frac{n(n-1)}{2}\right)=:\tilde{f}_n(x).\]
Using It\^o's lemma it is seen that $\tilde{f}_n$ indeed has the mean value property for $X_T$, i.e.,
\[x^n=\E_x\Big(\tilde{f}_n(X_T)\Big)\ \ \forall\ x\]
Recall that for Ornstein-Uhlenbeck processes the speed measure $m$ is given by
\[m(dx)=2{\rm e}^{-\gamma x^2}dx\]
and 
\[\psi_\beta(x)={\rm e}^{\gamma x^2/2}\D_{-\beta/\gamma}(-x\sqrt{2\gamma}),\]
where $\mathcal{D}_\nu$ denotes the parabolic cylinder function, see
\cite[A1.24,II.7]{BS}. 
Defining $Q_n=Q$ as in \eqref{f_hat_zero2} we obtain
\[x^n=\E_x\Big(Q_n(M_T)\Big).\]
Using arguments similar to those in Lemma \ref{lemma:diffusion-stop}
it is easy to see that $Q$ has a unique positive root $x^*$, which is
given by the positive solution of the equation
\begin{equation}\label{eq:OU}
\int_{-\infty}^x\tilde{f}(y){\rm e}^{-\gamma y^2/2}\D_{-\beta/\gamma}(-y\sqrt{2\gamma})dy=0.
\end{equation}
An approximative value of $x^*$ can be found using
numerical methods. Notice that $Q_n$ can also attain positive values
on the negative half-line. However, this problem can be dealt with by considering the function
\[
\hat{f}(x)=
\left\{
\begin{array}{rl}
0,  & x\leq 0,\\
  Q(x), &x>0,
\end{array}\right\}
   \]
which obviously satisfies the requirements of Theorem \ref{thm:one-sided}. Therefore, we obtain that the stopping time
\[\tau^*=\inf\{t:X_t\geq x^*\}\]
is optimal.

\subsection{American put in a L\'evy market}
In this subsection we review the well known example of a perpetual
American put option in a L\'evy driven market in the light of our
approach (cf. the solutions in \cite{M} and \cite{AK}, see also
\cite{CI} for further discussions). Although the solution is
well-known we want to demonstrate how to find the representing
function $\hat{f}$ systematically with the approach discussed in
Subsection \ref{sec:find_f}. Since the reward function 
$G(x)=K-{\rm e}^x, K>0,$ is non-increasing we use the modified
version of Theorem \ref{thm:one-sided}, see Remark
\ref{rem:left-sided}. The American call option can be handled similarly.

The generator $A$ of $X$ is given by
\begin{eqnarray}
\label{eq:generator}
&&
\nonumber
Af(x)=\frac{c^2}{2}\frac{d^2}{dx^2}f(x)+ b\frac{d}{dx}f(x)
\\
&&\hskip2cm 
+\int_{\mathbb{R}} \left(f(x+y)-f(x)-y\frac{d}{dx}f(x)1_{\{|y|<1\}}\right)\pi(dy),
\end{eqnarray}
where $(b,c,\pi)$ is the L\'evy triple of $X$ and $f$ is assumed to be
in the domain of $A.$ We wish to apply $\beta-A$ to $G(x)=K-{\rm e}^x.$ For this ansatz to work we
assume that 
\begin{equation}
\label{assu}
\E_0({\rm e}^{X_1})<{\rm e}^\beta. 
\end{equation}
At the end we will see that the
formula obtained is a solution for general $X$. We have  
\begin{align*}
(\beta-A)G(x)&=\beta(K-{\rm e}^x)+{\rm e}^x\left(\frac{c^2}{2}+b+\int_{\mathbb{R}}({\rm e}^y-1-y1_{|y|<1})\pi(dy)\right)\\
&=\beta K-(\beta-\psi(1)){\rm e}^x=:\tilde{f}(x),
\end{align*}
where $\psi(1)=\log \E_0({\rm e}^{X_1})<\beta$. To find $\hat{f}$ we
calculate, cf. Proposition \ref{levy-stop},
\begin{align*}
Q(x)&=\frac{1}{\beta}\,\E_0\left(\tilde{f}(x+M_T)\right)=K-\frac{\beta-\psi(1)}{\beta}\E_0({\rm e}^{M_T}){\rm e}^x.
\end{align*}
By noting that
\begin{equation}
\label{for11}
\frac{\beta-\psi(1)}{\beta}=\frac{1}{\E_0({\rm e}^{X_T})}
\end{equation}
and applying the Wiener-Hopf factorization (\ref{WiHo}) we obtain 
\[Q(x)=K-\E_0({\rm e}^{I_T})^{-1}{\rm e}^x.\]
Now we have a candidate for $\hat{f}$ by our ansatz, i.e., $\hat
f=Q{\vee}0,$ and 
formula (\ref{for11}) also makes sense for general L\'evy processes
without the integrability
assumption (\ref{assu}). 
Indeed, one immediately sees that for general $X$ we have
\[
K-{\rm e}^x=\E_x\Big(Q(I_T)\Big).
\]
We can apply Theorem \ref{thm:one-sided} (together with Remark
\ref{rem:left-sided}) to $\hat{f}$ and obtain 
that $x^*=\log(K\E_0({\rm e}^{I_T}))$ is the optimal stopping boundary and the value function is given by 
\[V(x)=\E_x\left(\hat{f}(I_T)1_{\{I_T<x^*\}}\right)=\E_x\Big(\Big(K-\frac{{\rm e}^{\,I_T}}{\E_0({\rm e}^{\,I_T})}\Big)^+\Big).\]

\subsection{Novikov-Shiryaev optimal stopping problem}
\label{subsec:appell}
In \cite{NS} Novikov and Shiryaev solved the optimal stopping problem
\eqref{OSP} with $G_n(x)=(x^+)^n,\, n=1,2,\dots,$ for random walks and in \cite{KS}
Kyprianou and Surya found the solution for L\'evy processes. Here we
discuss shortly how the solution can be found with the methodology
presented in this paper. For simplicity, we assume that $\beta>0.$

Let $X$ be a L\'evy process with the generator $A$ as given in
\eqref{eq:generator}. We fix the exponent $n\in \{1,2,\dots\}$ for the
reward function $G_n$ and
assume that the L\'evy measure $\pi$ satisfies 
\begin{equation}
\label{finite_exp}
\int_{(-\infty, -1)\cup(1, +\infty)}|y|^n\pi(dy)<\infty.
\end{equation}
This condition implies that $\E_x(|X_t|^k)<\infty$ for all $x,\, t\geq
0, $ and $k=1,2,\dots,n.$ Using the independence of increments, it is
easily seen that $\E_x(X_t^n)$ is a polynomial in $t.$ Consequently,
also the moments of $X_T$ up to $n$ exist. 
\begin{remark}
\label{cond1}
In \cite{KS} the solution is found under the  weaker condition that
(only) the integral over $(1,+\infty)$ in (\ref{finite_exp}) is
finite. Under this weaker condition  the moments of $M_T$ up to $n$
exist but the moments of $I_T$ may not exist. Working
under the stronger condition reveals the r\^ole of the Appell
polynomials of $X_T$ which we find interesting and important.
In fact, since the function $Q_n,$ see (\ref{QNS}), that is, the Appell polynomial associated
with $M_T,$ found in this way satisfies conditions (a) and (b) in
Theorem \ref{thm:one-sided} it is clear that our solution is indeed valid
under the weaker condition and, hence, coincides with the solution in
by Novikov and Shiryaev.
\end{remark}

We follow the receipt in Section \ref{sec:find_f} and operate with
$\beta -A$ on $x^n.$ This yields
\begin{align*}
\tilde{f}_n (x)&:=\left(\beta-A\right)(x^n)\\
&=\beta x^n-\left(\frac{c^2}{2}n(n-1)x^{n-2}+bnx^{n-1}\right)\\
&\hskip2cm
-\int_{{\mathbb{R}}}\left((x+y)^n-x^n-ynx^{n-1}1_{\{|y|\leq 1\}}\right)\pi(dy),
\end{align*}
where the integral term is well defined due to
(\ref{finite_exp}). Clearly, $\tilde f_n$ is an $n$th order polynomial
and satisfies the DE 
\begin{equation}\label{recursive}
\frac{d}{dx}\tilde{f}_n (x)=n\tilde{f}_{n-1}(x)
\end{equation}
with $f_0=1.$ We claim that 
\begin{equation}\label{mean}
\mathbb{E}_x\Big(\tilde{f}_n(X_T)\Big)=\mathbb{E}_0\Big(\tilde{f}_n(X_T+x)\Big)= \beta\,x^n,
\end{equation}
which, then, together with (\ref{recursive}) shows that $x\mapsto
\tilde{f}_n (x)/\beta$ is the $n$th order Appell polynomial associated
with $X_T,$ (for properties of  the Appell polynomials, see \cite{NS} and \cite{Sa}).
To prove (\ref{recursive}) we use standard computations, legitimate by (\ref{finite_exp}), with the It\^o formula for L\'evy processes,
see, e.g., \cite [p. 612]{JYC},  to obtain 
\begin{align*}
&{\rm e}^{-\beta t}\E_x\left(X_t^n\right) - x^n \\
&\hskip2cm
=\E_x\left(\int_0^t{\rm e}^{-\beta
      s}\left[\frac{c^2}{2}n(n-1)X_s^{n-2}+ bnX_s^{n-1} -\beta
      X_s^{n}\right]\, ds\right)\\
&\hskip2.5cm
+\E_x\left(\int_0^t{\rm e}^{-\beta s}\int_{{\mathbb{R}}}
\left[\left(X_s+y\right)^n-X_s^n - y\,n X_s^{n-1}1_{\{|y|\leq 1\}}\right]\pi(dy)\,ds\right).
\end{align*}
Multiplying both sides with $\beta,$ letting $t\to\infty$ and using
that the moments of $X_t$ are polynomials in $t$  we obtain
\begin{align*}
-\beta x^n &=\E_x\left(\int_0^\infty\beta{\rm e}^{-\beta
      s}\left[\frac{c^2}{2}n(n-1)X_s^{n-2}+ bnX_s^{n-1} -\beta
      X_s^{n}\right]\, ds\right)\\
&\hskip1cm
+\E_x\left(\int_0^\infty \beta {\rm e}^{-\beta s}\int_{{\mathbb{R}}}
\left[\left(X_s+y\right)^n-X_s^n - y\,n X_s^{n-1}1_{\{|y|\leq
    1\}}\right]\pi(dy)\,ds\right),
\end{align*}
which proves (\ref{mean}). Next we apply Proposition
\ref{levy-stop} to find the function $Q_n$ and then $\hat f.$ We write
formula (\ref{f_hat_zero1}) therein as 
\begin{equation}
\label{QNS}
Q_n(x)=\int_{-\infty}^0 Q^{(X_T)}_n(x+y)\mathbb{P}_{0}(I_{T}\in dy),
\end{equation}
where $Q^{(X_T)}_n(x):=\tilde{f}_n (x)/\beta.$ Recall from \cite{Sa}
the identity
\begin{equation}
\label{WHP}
Q^{(X_T)}_n(x+y)=\sum_{k=0}^n{n\choose k}
Q^{(M_T)}_k(x)Q^{(I_T)}_{n-k}(y)
\end{equation}
where $Q^{(M_T)}_i$ and $Q^{(I_T)}_i$ denote the $i$th Appell polynomials of
$M_T$ and $I_T,$ respectively. Identity (\ref{WHP}) follows easily
from the Wiener-Hopf factorization and the definition of the Appell
polynomials. Substituting (\ref{WHP}) into (\ref{QNS}) and using the normalizations 
$$
Q^{(I_T)}_0\equiv 1\quad {\rm and}\quad
\mathbb{E}_0\Big(Q^{(I_T)}_i(I_T)\Big)=0\ {\rm for\ all\ } i=1,2,...,n
$$
yields $Q_n=Q^{(M_T)}_n.$ Hence, our approach leads to the same
characterization of the optimal stopping time in terms of
$Q^{(M_T)}_n$ as in \cite{NS}. The crucial property to be able to
apply Theorem \ref{thm:one-sided}, i.e., that $Q^{(M_T)}_n$ has a unique
positive zero $x^*$ and is non-decreasing for $x>x^*$, is proved in \cite{NS} Lemma 5. Consequently, it
now easily follows that 
\begin{equation*}
\hat{f}(x):=\left\{
\begin{array}{rl}
Q^{(M_T)}_n(x),&\quad x> 0,\\
0,&\quad x\leq 0,
\end{array}\right\}
\end{equation*}
satisfies conditions (a) and (b) in Theorem \ref{thm:one-sided} and
the solution of the optimal stopping problem results. 
%hh

Note that Proposition \ref{prop:repr_measure} together
 with the fact that  that $x\mapsto
\tilde{f}_n (x)/\beta$ is the $n$th order Appell polynomial associated
with $X_T$ reproduces the formula of the representing measure
for spectrally positive L\'evy processes 
obtained in \cite[Corollary 8]{Sa}.

%H

\subsection{Absorbing Brownian motion}%Non-L\'evy process example}

Let $X$  be a Brownian motion on $\mathbb{R}_+$  absorbed at $0,$
i.e., $X_t=0$ for $t\geq H_0.$ We solve the optimal stopping
problem (\ref{OSP}) with $G_n(x)=x^n,~n\geq 1$. The first step is to
find  $\tilde{f}_n$ such that  (\ref{eq:cond_DLU}) holds. Guided by 
Subsection \ref{sec:find_f} we set
\begin{equation}
\label{appell1}
\tilde{f}_n(x)=(\beta-\frac 12\frac{d^2}{dx^2})G_n(x)=x^{n-2}\left(\beta
x^2-\frac{n(n-1)}{2}\right)=:\beta P_n(x).
\end{equation}
Instead of checking (\ref{eq:cond_DLU}) via a direct calculation,
we recall that $P_n$ is the $n$th order Appell polynomial associated
with a standard Brownian motion $(W_t)_{t\geq 0}$ started from 0 and
evaluated at $T,$ see \cite[Example 3.9]{Sa}. Hence, for all
$x$
\begin{equation}
\label{appell2}
\nonumber
x^n=\E_0\Big(P_n(x+W_T)\Big)=\E_x\Big(P_n(W_T)\Big)=\frac{1}{\beta}\E_x\Big(\tilde{f}_n(W_T)\Big),
\end{equation}
and, by the strong Markov property,
\begin{align*}
\E_x\Big(\tilde{f}_n(W_T)\Big)&=\E_x\Big(\tilde{f}_n(W_T)1_{\{T< H_0\}}\Big)+\E_x\Big(\tilde{f}_n(W_T)1_{\{T> H_0\}}\Big)\\
&%& \hskip2cm
=\E_x\Big(\tilde{f}_n(X_T)1_{\{X_T>0\}})\Big)+\E_x\Big(\E_x(\tilde{f}_n(W_T)\,|\,W_{H_0})1_{\{T> H_0\}}\}\Big)\\
&%& \hskip2cm
=\E_x\Big(\tilde{f}_n(X_T)1_{\{X_T>0\}}\Big).
\end{align*}
Consequently,  we have 
$$
G_n(x)=\frac{1}{\beta}\E_x\Big(\tilde{f}_n(X_T)1_{\{X_T>0\}}\Big),
$$
which is \eqref{eq:cond_DLU} if $n\not=2.$ To cover also the case
$n=2$ we abandon \eqref{eq:cond_DLU} and instead try to find a
function $Q_n$ such that 
$$
\E_x\Big(\tilde{f}_n(X_T)1_{\{X_T>0\}}\Big)
=\E_x\Big(Q_n(M_T)\Big).
$$
For this, the conditional distribution of $X_T$ given $M_T$ is needed,
see (\ref{conditional_dist}). Since $X$
is not conservative, this distribution has an
atom at 0 and, hence, the formula in Lemma \ref{diffusion-sup} is not
valid. However, from \cite[II.19]{BS} we may deduce     
$$
\mathbb{P}_x(M_T\in dz)/dz=\frac{\partial}{\partial z}\left(\frac{\psi_\beta(x)}{\psi_\beta(z)}\right)
$$
and
$$
\mathbb{P}_x(M_T\in dz, X_T\in
dy)=2\frac{\psi_\beta(x)\psi_\beta(y)}{\psi_\beta^2(z)}dz\,dy.
$$
where
$
\psi_\beta(x)={\rm sh}(x\sqrt{2\beta}),\ x\geq 0.
%{\rm e}^{x\sqrt{2\beta}}-{\rm e}^{-x\sqrt{2\beta}},\quad x\geq 0,
$
see ibid. p. 121. Consequently, the conditional density of $X_T$ given $M_T$ is given by 
\[
f_{X_T}(y|M_T=z)=\frac{2\psi_\beta(y)}{\psi^{\,\prime}_\beta(z)}.
\]
%h
%\begin{remark}
%Note that the polynomial $x^{n-2}\left(\beta x^2-\frac{n(n-1)}{2}\right)$ is -- up to a factor $\frac{1}{\beta}$ -- also the Appell polynomial assoc%iated with the function $x\mapsto x^n$ for a standard Brownian motion $(W_t)_{t\geq 0}$ evaluated at $T$ , see \cite[Example 3.9]{Sa}. For both proc%esses the polynomial indeed has the averaging property
%\[x^n=\frac{1}{\beta}\E_x(\tilde{f}_n(X_T))=\frac{1}{\beta}\E_x(\tilde{f}_n(W_T)),~x>0.\]
%\end{remark}
%We have $\psi_\beta(x)={\rm e}^{x\sqrt{2\beta}}-{\rm e}^{-x\sqrt{2\beta}}$ and by \cite[II.19]{BS} it holds that
%\[f_{X_T}(y|M_T=z)=\frac{2\beta\psi_\beta(y)}{\psi'_\beta(z)}\]
and, therefore,
\begin{align*}
\hskip-1cm & Q_n(z):=\frac{1}{\beta}\int_0^z\tilde{f}_n(y)\frac{2\psi_\beta(y)}{\psi^{\,\prime}_\beta(z)}dy\\
&\hskip1cm
=\frac{2}{\beta\sqrt{2\beta}\,{\rm ch}(z\sqrt{2\beta})}
\int_0^zy^{n-2}\left(\beta y^2-\frac{n(n-1)}{2}\right){\rm sh}(y\sqrt{2\beta})dy.
\end{align*}
Since the integrand changes its sign from $-$ to $+$ one can see
(after some easy calculations) that $Q_n$ satisfies \eqref{ii} of
Theorem \ref{thm:one-sided}. Although Proposition \ref{diffusion-stop}
is not directly applicable it can be proved analogously as therein that  \eqref{c} of
Theorem \ref{thm:one-sided} holds for $\hat{f_n}:=Q_n\vee 0$. Therefore, the optimal stopping boundary $x^*$ is given as the unique positive solution to the equation
\begin{equation}
\label{fe1}
\int_0^zy^{n-2}\left(\beta y^2-\frac{n(n-1)}{2}\right){\rm
  sh}(y\sqrt{2\beta})dy=0.
\end{equation}
Integrating by parts, (\ref{fe1}) is seen to be equivalent with 
\begin{equation}
\label{fe2}
z\,{\rm ch}(z\sqrt{2\beta})-\frac{n}{\sqrt{2\beta}}\,{\rm sh}(z\sqrt{2\beta})=0.
\end{equation}
Notice that for $n=1$ this equation does not have a positive solution
and the optimal stopping rule is to stop immediately -- in other
words, reward $G(x)=x$ is $\beta$-excessive. 

It is also interesting to observe that (\ref{fe2}) is of the form 
$$
G_n(z)\frac{d\psi(z)}{dz}-\psi(z)\frac{dG_n(z)}{dz}=0,$$
and, hence, our approach yields in this case the same characterization
of the stopping point as given in Remark (ii) p. 97 in \cite{salminen85}.

\subsection{Markov chain}
To see the general applicability of the theory we treat  in this
subsection an easy example with an underlying finite-state Markov chain. Let $(X_t)_{t\geq 0}$ be a continuous-time Markov chain with state space $E=\{1,...,4\}$ and transition rates as given in the following figure.
\begin{center}
\begin{tikzpicture}[->,>=stealth',shorten >=1pt,auto,node distance=2.8cm,semithick]
\tikzstyle{every state}=[fill=white,draw=black,thick,text=black,scale=1]
\node[state]         (A){$1$};
\node[state]         (B) [right of=A] {$2$};
\node[state]         (C) [right of=B] {$3$};
\node[state]         (D) [right of=C] {$4$};
\path (A) edge  [bend right] node[below] {$\lambda_{12}$} (B);
\path (D) edge  [loop above]  (D);
\path (B) edge  [bend right] node[below] {$\lambda_2=2$} (C);
\path (C) edge  [bend right] node[below] {$\lambda_3=2$} (D);
\path (A) edge  [bend right=-20] node[above] {$\lambda_{14}$} (D);
\end{tikzpicture}
\end{center}
To obtain explicit results let us assume that $\beta=1$ and $G(1)=3$,
$G(2)=1=G(3)$ and $G(4)=2$. We could use the method described in
Subsection \ref{sec:find_f} to find $\hat{f}$ such that
$G(x)=\E_x(\sup_{t\leq T}\hat{f}(X_t))$ for all $x\in E$, but it may be
instructive to find it directly: Since $4$ is an absorbing state, we take $\hat{f}(4)=G(4)=2$. To find $\hat{f}(3)$ we make the ansatz that $\hat{f}(3)\leq \hat{f}(4)$ and obtain
\[1=G(3)=\E_3\left(\sup_{t\leq T}\hat{f}(X_t)\right)=\hat{f}(4)
\mathbb{P}_3(S_3\leq T)+\hat{f}(3)\mathbb{P}_3(S_3> T),\]
where the sojourn time $S_3$ at state 3 is exponentially distributed
with parameter 2 %$Exp(\lambda_3)$-distributed 
and independent of $T$. Solving for $\hat{f}(3)$ yields 
\[\hat{f}(3)=\frac{\lambda_3+\beta}{\beta}(1-2\frac{\lambda_3}{\lambda_3+\beta})=-1.\]
By a similar calculation we obtain that 
\[\hat{f}(2)=\frac{(\beta+\lambda_2)^2}{\beta^2+2\beta\lambda_2}\left(1-\hat{f}(4)\frac{\lambda_2^2}{(\beta+\lambda_2)^2}\right)=\frac{1}{5}.\]
Since $1$ is a global maximum point of $G$ taking 
\[\hat{f}(1)=G(1)=3\]
yields the right representation. We see that the assumptions of Theorem \ref{thm:two-sided} are fulfilled (with $x_*=2$, $x^*=4$). Therefore
\[\tau^*=\inf\{t\geq 0:X_t\not=3\}\]
is an optimal stopping time.

\noindent {\bf Acknowledgements.\ }
This paper was partly written during the first author's stay at \AA bo Akademi in the project \textit{Applied Markov processes -- fluid queues, optimal stopping and population dynamics, Project number: 127719  (Academy of Finland)}.  He would like to express his gratitude for the hospitality and support.

\newpage

\bibliographystyle{plain}
%\bibliographystyle{alpha}
%\bibliography{hunt-sup-v3}
%\listofchanges
%\end{document}

%\bibliographystyle{plain}
%\bibliographystyle{alpha}
%\bibliography{hunt-sup-v3}
%\listofchanges
\end{document}